\numberwithin{equation}{section}
\newtheorem{Theorem}{Theorem}[section]
\newtheorem{Lemma}{Lemma}[section]
\newtheorem{Remark}{Remark}[section]
\newtheorem{Definition}{Definition}[section]
\newtheorem{Proposition}{Proposition}[section]
\begin{document}

\title{Energy Scattering for Schr\"{o}dinger Equation with Exponential Nonlinearity in Two Dimensions\thanks{This work is supported by China Scholarship Council} }

\author{Shuxia Wang\thanks{{\sl E-mail address: }wangshuxia@pku.edu.cn}\\
 \it\small School of Mathematical Sciences, Peking University,\\
 \it \small Beijing 100871,  People's Republic of China}
\date{}
\maketitle

\begin{abstract}
When the spatial dimensions $n$=2,  the  initial data $u_0\in H^1$ and the Hamiltonian $H(u_0)\leq 1$,
we prove that
the scattering operator is well-defined in the whole energy space $H^1(\mathbb{R}^2)$ for nonlinear Schr\"{o}dinger equation
with exponential nonlinearity $(e^{\lambda|u|^2}-1)u$, where $0<\lambda<4\pi$.
\end{abstract}

\section{Introduction}

We consider the Cauchy problem for the following nonlinear Schr\"{o}dinger equation
\begin{equation}\label{1.1}
iu_t+\triangle u=f(u),
\end{equation}
\begin{equation}\label{1.1a}
f(u):= (e^{\lambda|u|^2}-1)u,
\end{equation}
in two spatial dimensions with initial data $u_0\in H^1$ and $0<\lambda<4\pi$.
Solutions of the above problem satisfy the conservation of mass and Hamiltonian
\begin{align*}
\begin{aligned}
M(u;t)&:=\int_{\mathbb{R}^2}|u|^2dx=M(u_0), \\
H(u;t)&:=\int_{\mathbb{R}^2}(|\nabla u|^2+F(u))dx=H(u_0),
\end{aligned}
\end{align*}
where
\begin{equation*}
F(u)=\frac{1}{\lambda}(e^{\lambda|u|^2}-\lambda|u|^2-1).
\end{equation*}

Nakamura and Ozawa\cite{Nakamura} showed the existence and
uniqueness of the scattering operator of \eqref{1.1} with \eqref{1.1a}.
Then, Wang\cite{Wang} proved the smoothness of this scattering operator. However, both of these
results are based on the assumption of small initial data $u_0$. In
this paper, we  remove this assumption and show that for arbitrary
 initial data $u_0\in H^1(\mathbb{R}^2)$ and $H(u_0)\leq 1$, the scattering
operator is always well-defined.

Wang et al.\cite{W-H} proved the energy scattering theory of \eqref{1.1} with $f(u)=(e^{\lambda|u|^2}-1-\lambda|u|^2-\frac{\lambda^2}{2}|u|^4)u$, where $\lambda\in\mathbb{R}$ and the spatial dimension $n=1$. Ibrahim et al.\cite{Naka2009} showed the existence and asymptotic completeness of the wave operators for \eqref{1.1} with $f(u)=(e^{\lambda|u|^2}-1-\lambda|u|^2)u$ when the spatial
 dimensions $n=2$, $\lambda=4\pi$ and $H(u_0)\leq 1$.
Under the same assumptions as \cite{Naka2009},  Colliander et al.\cite{Coll2} proved the global well-posedness of \eqref{1.1} with \eqref{1.1a}:
 \begin{Theorem}
Assume that $u_0\in H^1(\mathbb{R}^2)$, $H(u_0)\leq 1$ and $\lambda=4\pi$.
Then  problem \eqref{1.1} with \eqref{1.1a} has a unique global solution $u$ in the class $C(\mathbb{R},H^1(\mathbb{R}^2))$.
\end{Theorem}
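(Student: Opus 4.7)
The plan is to prove the theorem in two stages: first establish local well-posedness in $H^1(\mathbb{R}^2)$ by a Strichartz-based fixed-point argument, then globalize using the conservation of mass and Hamiltonian together with the sharp Moser-Trudinger inequality
$$\sup_{\|\nabla u\|_{L^2}\le 1}\int_{\mathbb R^2}\bigl(e^{4\pi|u|^2}-1\bigr)\,dx\le C\|u\|_{L^2}^2.$$
The hypothesis $H(u_0)\le 1$ combined with $F(u)\ge 0$ yields the a priori bound $\|\nabla u(t)\|_{L^2}^2\le 1$ for all $t$ in the existence interval, which is exactly the threshold at which the nonlinearity $f(u)=(e^{4\pi|u|^2}-1)u$ still makes sense in $L^p$ for any $p<\infty$.

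For local existence I would rewrite \eqref{1.1} via Duhamel,
$$u(t)=e^{it\Delta}u_0-i\int_0^t e^{i(t-s)\Delta}f(u(s))\,ds,$$
and look for fixed points in a space of the form $X_T=C([0,T];H^1)\cap L^4_t W^{1,4}_x$ (an admissible Strichartz pair in $\mathbb R^2$). Expanding
$$f(u)=\sum_{k\ge 1}\frac{(4\pi)^k}{k!}\,|u|^{2k}u,$$
and combining Sobolev embeddings with the Moser-Trudinger bound applied termwise after factoring out $\|\nabla u\|_{L^2}$, one obtains nonlinear estimates of the form $\|f(u)\|_{L^1_t H^1_x}\le C(\|u\|_{X_T})\|u\|_{X_T}$, provided the relevant norm stays strictly below the critical threshold. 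A contraction argument on a small ball in $X_T$ then yields existence and uniqueness on $[0,T_0]$ with $T_0$ depending only on $\|u_0\|_{H^1}$ and on the margin of subcriticality.

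The main obstacle is precisely this criticality: once $\|\nabla u\|_{L^2}^2$ is allowed to equal $1$, the Moser-Trudinger constants saturate and a naive iteration fails, so the time of existence $T_0$ cannot be chosen uniformly and bootstrapping to a global solution breaks down. To circumvent this I would exploit two ingredients. First, $H(u_0)\le 1$ together with positivity of $F$ actually gives the quantitative gain
$$\|\nabla u(t)\|_{L^2}^2\le 1-\lambda\!\int_{\mathbb R^2}F(u(t))\,dx,$$
so whenever $u\not\equiv 0$ there is a strict deficit controlled from below by the conserved mass, producing a subcritical margin that propagates in time. Second, to close the local theory at the sharp endpoint one may split $u_0=P_{\le N}u_0+P_{>N}u_0$ into a bounded $H^2$ low-frequency part, handled by classical Kato-type methods, and an arbitrarily small $H^1$ high-frequency remainder, treated perturbatively by the above Strichartz scheme. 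Patching these together gives local well-posedness for every datum with $H(u_0)\le 1$.

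Globalization is then automatic: since the a priori bound $\|u(t)\|_{H^1}^2\le \|u_0\|_{L^2}^2+H(u_0)\le 1+\|u_0\|_{L^2}^2$ is preserved by the conservation laws, one iterates the local theorem on $[T_0,2T_0],[2T_0,3T_0],\dots$, with a uniform step size depending only on $\|u_0\|_{H^1}$ and on the subcriticality margin inherited from $H(u_0)\le 1$. Uniqueness in $C(\mathbb R;H^1(\mathbb R^2))$ follows by applying the same Moser-Trudinger-based nonlinear estimate to the difference of two solutions and invoking Gronwall's inequality.
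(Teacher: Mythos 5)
First, a point of comparison: the paper does not prove this theorem at all --- it is quoted verbatim from Colliander--Ibrahim--Majdoub--Masmoudi \cite{Coll2}, so your attempt is really being measured against that reference rather than against anything in this manuscript.

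Your overall skeleton (Duhamel fixed point, a priori kinetic-energy bound from $H(u_0)\le 1$ and $F\ge 0$, iteration in time) is the right starting point, but there are genuine gaps. The most serious one is the very first display: the Trudinger--Moser inequality
\begin{equation*}
\sup_{\|\nabla u\|_{L^2}\le 1}\int_{\mathbb{R}^2}\bigl(e^{\lambda|u|^2}-1\bigr)\,dx\le C(\lambda)\|u\|_{L^2}^2
\end{equation*}
is \emph{false} at $\lambda=4\pi$; the Adachi--Tanaka result (Lemma 2.1 of the paper) holds only for $\lambda\in[0,4\pi)$, and the supremum is $+\infty$ at the endpoint when only the gradient is constrained. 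Since the whole theorem concerns exactly $\lambda=4\pi$ and the a priori bound gives only $\|\nabla u(t)\|_{L^2}\le 1$ (with possible equality), every nonlinear estimate you build on this inequality is unavailable precisely in the case you must treat. Your two proposed repairs do not close this. The ``propagating deficit'' $\|\nabla u(t)\|_{L^2}^2\le 1-\int F(u(t))\,dx$ (note also the spurious factor $\lambda$ in your version) is a correct inequality, but $\int F(u(t))\,dx$ is \emph{not} conserved and is \emph{not} bounded below by the mass --- it is controlled from below only by $\|u(t)\|_{L^4}^4$, which can tend to zero as the solution disperses --- so no uniform subcritical margin propagates and the claimed uniform local existence time does not follow. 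The frequency splitting $u_0=P_{\le N}u_0+P_{>N}u_0$ is a reasonable instinct, but the exponential nonlinearity does not decouple: $e^{4\pi|v+w|^2}$ contains the cross factor $e^{8\pi\,\mathrm{Re}(v\bar w)}$, which is not controlled by smallness of $w$ in $\dot H^1$ against a merely bounded $v$. The actual proof in \cite{Coll2} gets past the threshold by a logarithmic (Brezis--Gallou\"et-type) interpolation inequality combined with a careful case analysis at $\|\nabla u\|_{L^2}=1$; some substitute for that ingredient is what is missing here. The same issue affects your uniqueness argument, since the Gronwall step again needs $e^{4\pi|u|^2}-1$ in a Lebesgue space of exponent strictly greater than $1$.
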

\begin{Remark}
In fact, by the proof in \cite{Coll2}, the global well-posedness of \eqref{1.1} with \eqref{1.1a} is also true for $0<\lambda\leq4\pi$.
\end{Remark}

 In this paper, we further study the scattering of this problem.
Note that $f(u)=(e^{\lambda|u|^2}-1)u=\sum_{k=1}^\infty\frac{\lambda^k}{k!}|u|^{2k}u$.
Nakanishi\cite{Naka99} proved the existence of the scattering operators in the whole energy space $H^1(\mathbb{R}^2)$ for \eqref{1.1} with $f(u)=|u|^pu$ when $p>2$.  Then, Killip et al.\cite{B-K-T-V} and Dodson\cite{Dodson} proved the existence of the scattering operators in $L^2(\mathbb{R}^2)$ for\eqref{1.1} with $f(u)=|u|^2u$.  Inspired by these two works, we use the concentration compactness method, which was introduced by Kenig and Merle in \cite{K-M},
 to prove the existence of the scattering operators for \eqref{1.1} with \eqref{1.1a}.

For convenience, we write \eqref{1.1} and  \eqref{1.1a} together, i.e.
\begin{equation}\label{1.1b}
iu_t+\triangle u=f(u):=(e^{\lambda|u|^2}-1)u,\ \ u(0,x)=u_0,
\end{equation}
where $u_0\in H^1(\mathbb{R}^2)$ and $0<\lambda<4\pi$.
Our main result is:
 \begin{Theorem} \label{th1.2}
 Assume that the  initial date $u_0\in H^1(\mathbb{R}^2)$, $H(u_0)\leq 1$ and $0<\lambda<4\pi$. Let $u$ be a global solution of \eqref{1.1b}. Then
\begin{equation}\label{1.2}
\| u \|_{L_{t,x}^4(\mathbb{R}\times \mathbb{R}^2)}<\infty.
\end{equation}
\end{Theorem}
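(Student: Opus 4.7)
The plan is to carry out the Kenig--Merle concentration-compactness/rigidity scheme, adapted to the non-scaling-invariant exponential nonlinearity of \eqref{1.1b}. The starting observation is that the hypothesis $H(u_0)\leq 1$ combined with $F(u)\geq 0$ forces $\|\nabla u(t)\|_{L^2_x}^2\leq 1$ for all $t$, so that $\lambda\|\nabla u(t)\|_{L^2_x}^2\leq \lambda<4\pi$ uniformly. This leaves a positive margin in the Moser--Trudinger inequality, from which one extracts $L^\infty_t L^p_x$ bounds on $e^{\lambda|u|^2}-1$ for every $p<\infty$. Combined with the admissible Strichartz pair $(4,4)$ in two dimensions and the small-data scattering of Nakamura--Ozawa, a bootstrap argument reduces the whole problem to showing the a priori spacetime bound \eqref{1.2}.

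Next I would establish a long-time perturbation (stability) lemma: if $\tilde u$ is an approximate solution on $I\times\mathbb{R}^2$ with $\|\tilde u\|_{L^4_{t,x}(I\times\mathbb{R}^2)}\leq L$ and the residual $i\tilde u_t+\triangle\tilde u-f(\tilde u)$ is small in a dual Strichartz norm, then every genuine solution with nearby data exists on $I$ and obeys a comparable $L^4_{t,x}$ bound. The key pointwise inequality
\[
|f(u)-f(\tilde u)|\lesssim \bigl(e^{\lambda|u|^2}+e^{\lambda|\tilde u|^2}\bigr)(|u|+|\tilde u|)|u-\tilde u|,
\]
together with the Moser--Trudinger control from the first paragraph, makes the resulting difference term manageable. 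With this in hand, I would apply a Bahouri--Gérard type $H^1$-profile decomposition to any sequence of initial data violating \eqref{1.2}. Because \eqref{1.1b} carries only space and time translation symmetries, no spatial scale parameter enters, and the decomposition reduces to a sum of translated and time-translated linear profiles plus a remainder negligible in a suitable Strichartz norm. Nonlinear profiles strictly below a critical energy threshold scatter, and the perturbation lemma forces at most a single surviving profile. This produces a \emph{critical element} $u_c$: a nontrivial global solution with $H(u_c)$ at the threshold, $\|u_c\|_{L^4_{t,x}}=\infty$, and whose $H^1$-orbit is precompact modulo spatial translations $x(t)$.

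The final step is the rigidity argument excluding $u_c$. Almost periodicity in $H^1$ provides good control on $x(t)$, and a truncated Lin--Strauss/Morawetz identity applied to $u_c$ yields a finite bound for $\int_{\mathbb{R}}\int_{\mathbb{R}^2}|u_c|^4\,dx\,dt$ in terms of $\|u_c\|_{L^\infty_t H^1_x}$, using $F(u)\geq \tfrac{\lambda}{2}|u|^4$ for the leading Taylor term. On the other hand, compactness modulo translations gives a uniform in $t$ positive lower bound for $\int|u_c|^4\,dx$, and integrating against the Morawetz output produces a contradiction, forcing $u_c\equiv 0$. The main obstacle I anticipate is the perturbation/stability step: unlike the pure-power case, the Taylor expansion of $f$ contains infinitely many nonlinear contributions, so one cannot rely on a single Strichartz pair to control $f(u)-f(\tilde u)$. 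One must interpolate between the $L^4_{t,x}$ bound and the uniform $H^1$ bound, exploiting the subcritical Moser--Trudinger margin $\lambda<4\pi$ to sum the series, and package the result in a form robust enough to survive the profile-decomposition limit. The subsequent rigidity step is, by comparison, a fairly standard Morawetz application once the compactness of $u_c$ has been secured.
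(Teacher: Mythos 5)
Your overall roadmap (reduction to the a priori $L^4_{t,x}$ bound, long-time perturbation, profile decomposition, extraction of a critical element, Morawetz rigidity) coincides with the paper's, and your perturbation and rigidity sketches are broadly consistent with Lemma \ref{B-Lem1.4} and Lemmas \ref{Lem4.1}--\ref{Lem5.2}. The genuine gap is in the profile decomposition step, where you assert that ``no spatial scale parameter enters'' because the equation has only translation symmetries. The norm in which the remainder must become negligible is $S(e^{it\triangle}w_n)=\|e^{it\triangle}w_n\|^4_{L^4_{t,x}(\mathbb{R}\times\mathbb{R}^2)}$, which is the \emph{mass-critical} Strichartz norm: it is invariant under $L^2$-scaling and Galilean boosts, and an $H^1$ bound does not prevent mass from escaping to large spatial scales. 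Concretely, $u_n(x)=\lambda_n^{-1}\phi(x/\lambda_n)$ with $\lambda_n\to\infty$ is bounded in $H^1$, every translate of it converges weakly to $0$ (so a translation-only decomposition assigns it no profile), yet $S(e^{it\triangle}u_n)=S(e^{it\triangle}\phi)>0$. Hence your remainder cannot be made small, and the ``at most one surviving profile'' argument collapses. This is precisely why Lemma \ref{N2-Lem1} of the paper retains the full group $G'$ (scaling, boosts, space-time translations) and then proves that surviving profiles satisfy either $\lambda_{n\alpha}\equiv1,\ \xi_{n\alpha}\equiv0$ or $\lambda_{n\alpha}\to\infty$; the latter large-scale, small-amplitude profiles see only the cubic part of $f$, their nonlinear evolutions must be defined as solutions of the mass-critical equation $iu_t+\triangle u=|u|^2u$, and their global $L^4_{t,x}$ bounds are imported from the Killip--Tao--Visan and Dodson theorems. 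Your proposal contains no substitute for this external input, and without it the construction of the critical element fails.

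A second, related misalignment: since $H(u_0)\leq1$ is imposed on all data, one cannot run an induction ``below a critical energy threshold''; small energy does not control the mass, and the small-data seed available here is the small-\emph{mass} estimate $\|u\|_{L^4_{t,x}}\lesssim\|u_0\|_{L^2}$. The paper accordingly inducts on the mass via $A(m)$ and a critical mass $m_0$, with the $L^2$ decoupling \eqref{N2-2} driving the induction. On the rigidity side, note also that the plain Lin--Strauss Morawetz identity is delicate in two dimensions and that your fixed-center truncated version does not by itself handle a moving concentration core; the paper instead combines Nakanishi's $\langle t\rangle$-weighted Morawetz bound \eqref{4.1} with the finite-speed-type estimate of Lemma \ref{Lem5.2}, which yields $|x(t)-x(0)|\lesssim C(\eta)+R|t|$ and then the divergence of $\int\langle t\rangle^{-1}dt$ gives the contradiction.
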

In Section 2, Lemma \ref{Lem2.06} will show us that Theorem \ref{th1.2} implies the following scattering result:
\begin{Theorem} \label{th1.1}
Assume that the initial date $u_0\in H^1(\mathbb{R}^2)$,  $H(u_0)\leq 1$ and $0<\lambda<4\pi$.
 Then the solution of  \eqref{1.1b} is scattering.
\end{Theorem}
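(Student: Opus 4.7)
The plan is to derive Theorem \ref{th1.1} from Theorem \ref{th1.2} by a standard Strichartz bootstrapping argument, which is the content of the forthcoming Lemma \ref{Lem2.06}. The reduction proceeds in three stages.

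Stage one: extract uniform-in-time bounds from the conservation laws and Moser--Trudinger. Since $F(u)\geq 0$, the bound $H(u_0)\leq 1$ gives $\|\nabla u(t)\|_{L^2}^2\leq 1$ for all $t$, and combined with mass conservation this yields $\sup_t\|u(t)\|_{H^1}\leq C(u_0)$. Because $\lambda<4\pi$, the sharp Moser--Trudinger inequality in $\mathbb{R}^2$ applied at each time slice gives
\begin{equation*}
\sup_t \int_{\mathbb{R}^2}\bigl(e^{\lambda|u(t)|^2}-1\bigr)\,dx \leq C(u_0),
\end{equation*}
and similarly controls higher $L^p$ norms of $e^{\lambda|u|^2}-1$.

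Stage two: partition the time axis using Theorem \ref{th1.2}. The finiteness of $\|u\|_{L^4_{t,x}(\mathbb{R}\times\mathbb{R}^2)}$ lets one split $\mathbb{R}$ into finitely many intervals $I_1,\dots,I_N$ with $\|u\|_{L^4_{t,x}(I_j\times\mathbb{R}^2)}<\eta$ for a small $\eta$ fixed later. On each $I_j$ apply the Strichartz estimates to Duhamel's formula
\begin{equation*}
u(t)=e^{i(t-t_j)\Delta}u(t_j)-i\int_{t_j}^{t}e^{i(t-s)\Delta}f(u(s))\,ds,
\end{equation*}
and estimate the nonlinearity via the Taylor expansion $f(u)=\sum_{k\geq 1}\frac{\lambda^k}{k!}|u|^{2k}u$: use the $L^4_{t,x}$-smallness to absorb the cubic part, and the uniform $H^1$ together with the Moser--Trudinger control of stage one to close the exponential tail. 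The same estimate applied to $\nabla f(u)=(1+2\lambda|u|^2)e^{\lambda|u|^2}\nabla u-\nabla u$ gives the analogous bound at the $H^1$ level.

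Stage three: glue and conclude scattering. Iterating the Strichartz estimate across the finite partition yields global $H^1$-Strichartz control on $u$. A standard Cauchy-in-$H^1$ argument then shows that
\begin{equation*}
u_+:=u_0-i\int_0^\infty e^{-is\Delta}f(u(s))\,ds
\end{equation*}
exists in $H^1(\mathbb{R}^2)$ and that $\|u(t)-e^{it\Delta}u_+\|_{H^1}\to 0$ as $t\to+\infty$; the $t\to-\infty$ direction is symmetric, proving scattering.

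The main obstacle is closing the nonlinear estimate for the exponential nonlinearity. Since $\lambda=4\pi$ is precisely the critical threshold for Moser--Trudinger in $\mathbb{R}^2$, the positive margin $4\pi-\lambda$ must be used quantitatively to split the Taylor series into a low-order piece controlled by $L^4_{t,x}$-smallness and a high-order piece absorbed by a uniform Moser--Trudinger bound slightly above $\lambda$. Balancing these two pieces so that the Strichartz estimate closes, and verifying that the same balance survives one derivative, is the delicate step that Lemma \ref{Lem2.06} will carry out in Section 2.
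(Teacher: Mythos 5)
Your proposal follows essentially the same route as the paper: Lemma \ref{Lem2.06} is proved exactly by splitting $f(u)$ into the cubic term plus the exponential tail $(e^{\lambda|u|^2}-\lambda|u|^2-1)u$, closing the Strichartz estimate on each of finitely many intervals where $\|u\|_{L^4_{t,x}}$ is small (Bourgain's splitting), and using the Trudinger inequality with the margin $\lambda<4(1-4\varepsilon)\pi$ to control the tail at the $H^1$ level. The only cosmetic difference is that the paper extracts an explicit small factor $\|u\|^{8\varepsilon}_{L^4_{t,x}(I_j)}$ from the exponential tail rather than relying on the Moser--Trudinger bound alone, which is precisely the quantitative balancing you flag as the delicate step.
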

We will prove Theorem \ref{th1.2} by contradiction in Section 5. In Section 2, we give some nonlinear estimates.
In Section 3, we prove the stability of solutions. In Section 4, we  give a new profile decomposition for $H^1$ sequence
which will be used to prove concentration compactness.

Now, we introduce some notations:
\begin{gather*}
\langle x\rangle=\sqrt{1+|x|^2},\ \ \langle\cdot,\cdot\rangle\ is\ the\ inner\ product\ in\ L^2(\mathbb{R}^2) ,\\
 G(u):=\bar{u}f(u)-F(u)=e^{\lambda|u|^2}|u|^2-\frac{1}{\lambda}(e^{\lambda|u|^2}-1)=\sum\nolimits_{k=1}^{\infty}\frac{k\lambda^k|u|^{2k+2}}{(k+1)!},\\
E=E(u;t):=M(u;t)+H(u;t).
\end{gather*}
We define
\begin{align*}
\|u\|_{H^s_q(\mathbb{R}^2)}
:=\|(I-\triangle)^{s/2} u\|_{L^q(\mathbb{R}^2)},\ \
\|u\|_{\dot{H}^s_q(\mathbb{R}^2)}
:=\|(-\triangle)^{s/2} u\|_{L^q(\mathbb{R}^2)}.
\end{align*}
For Banach space $X=H^s_q(\mathbb{R}^2)$, $\dot{H}^s_q(\mathbb{R}^2)$ or $L^q(\mathbb{R}^2)$, we denote
\begin{align*}
\|u\|_{L^p(\mathbb{R};X)}
:=\left(\int_{\mathbb{R}}\|u(t)\|_X^p dt\right)^{1/p},
\end{align*}
When $q$ =$r$, we abbreviate $L_t^qL_x^r$ as $L_{t,x}^q$. When $q$ or $r$ are infinity, or when the domain
$\mathbb{R}\times\mathbb{R}^2$ is replaced by $I\times\mathbb{R}^2$, we make the usual modifications. Specially,
we denote
\begin{equation*}
S(u):=\|u\|^4_{L_{t,x}^4(\mathbb{R}\times\mathbb{R}^2)}.
\end{equation*}
If $t_0\in \mathbb{R}$, we  split $S(u)=S_{\leq t_0}(u)+S_{\geq t_0}(u)$, where
\begin{equation*}
S_{\leq t_0}(u):=\int_{-\infty}^{t_0}\int_{\mathbb{R}^2}|u(t,x)|^4dxdt
\end{equation*}
and
\begin{equation*}
S_{\geq t_0}(u):=\int_{t_0}^{+\infty}\int_{\mathbb{R}^2}|u(t,x)|^4dxdt.
\end{equation*}

For any two Banach spaces $X$ and $Y$, $\|\cdot\|_{X\cap Y}:=\max \{\|\cdot\|_{X},\|\cdot\|_{Y}\}$.
$C$ denotes positive constant. If $C$ depends upon some
parameters, such as $\lambda$, we will indicate this with $C(\lambda)$.
\begin{Remark}
Note that $0<\lambda<4\pi$ in Theorem \ref{th1.2}, we only need to prove the result for $0<\lambda<4(1-4\varepsilon)\pi$, $\varepsilon\in(0,1/8)$.
Hence, we always suppose that $0<\lambda<4(1-4\varepsilon)\pi$ in the context.

Moreover, we always suppose that the initial date $u_0$ of \eqref{1.1b} satisfies $u_0\in H^1(\mathbb{R}^2)$ and  $H(u_0)\leq 1$.
\end{Remark}

\section{Nonlinear Estimates}
In order to estimate \eqref{1.1a}, we need the following Trudinger inequality.
\begin{Lemma}\label{Lem2.03}{\rm(\cite{AT})}
Let $\lambda\in[0,4\pi)$. Then for all $u\in H^1(\mathbb{R}^2)$ satisfying $\|\nabla u\|_{L^2(\mathbb{R}^2)}\leq 1$, we have
\begin{equation*}
\int_{\mathbb{R}^2}\left( e^{\lambda|u|^2}-1\right) dx\leq  C(\lambda)\|u\|^2_{L^2(\mathbb{R}^2)}.
\end{equation*}
\end{Lemma}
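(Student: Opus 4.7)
The plan is to follow the Adachi--Tanaka approach: symmetrize to reduce to radial decreasing functions, then combine the pointwise decay of such functions with a carefully tuned cutoff radius to control both the bulk and tail contributions to $\int(e^{\lambda|u|^2}-1)\,dx$. First, by density I may assume $u \in C_c^\infty(\mathbb{R}^2)$, and by replacing $u$ with its Schwarz rearrangement $u^{*}$ I may further assume $u$ is radial and nonincreasing: the $L^p$ norms and the integral $\int(e^{\lambda|u|^2}-1)\,dx$ are preserved, while $\|\nabla u^{*}\|_{L^2}\leq\|\nabla u\|_{L^2}\leq 1$ by P\'olya--Szeg\H{o}.

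For a radial decreasing $u$ I would use two classical pointwise bounds. The $L^2$ monotonicity bound $\pi r^2|u(r)|^2 \leq \|u\|_{L^2}^2$ gives a priori decay at infinity in terms of mass, while applying Cauchy--Schwarz to $u(r)-u(R) = -\int_r^R u'(s)\,ds$ in polar form yields the logarithmic estimate $|u(r)|^2 \leq |u(R)|^2 + \tfrac{1}{2\pi}\log(R/r)$ for $r<R$. Fix a parameter $M$ (for concreteness $M=1$) and choose $R$ so that $|u(R)|^2 = M$; this forces $R^2 \leq \|u\|_{L^2}^2/(\pi M)$. Split the integral at $|x|=R$. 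On the exterior, $|u|^2 \leq M$ pointwise, so by monotonicity of $(e^x-1)/x$ one has $e^{\lambda|u|^2}-1 \leq \lambda e^{\lambda M}|u|^2$, and the exterior contribution is bounded by $C(\lambda,M)\|u\|_{L^2}^2$. On the interior, substitute the logarithmic bound into the exponential to obtain $e^{\lambda|u(r)|^2}\leq e^{\lambda M}(R/r)^{\lambda/(2\pi)}$ and integrate in polar coordinates.

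The main obstacle---and exactly the reason for the strict restriction $\lambda<4\pi$---is the convergence of the radial integral $\int_0^R r\,(R/r)^{\lambda/(2\pi)}\,dr$, which is finite if and only if $\lambda/(2\pi)<2$. When this holds, the interior contribution is proportional to $R^2$, and by the choice of $R$ this is controlled by $\|u\|_{L^2}^2/M$. Summing the two pieces yields a constant $C(\lambda)$ that is finite for every fixed $\lambda<4\pi$ but blows up as $\lambda\to 4\pi^{-}$; this sharp threshold is precisely what distinguishes the whole-space inequality from the bounded-domain Moser--Trudinger inequality, and tracking it correctly across the splitting is the delicate part of the argument.
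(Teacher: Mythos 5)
The paper gives no proof of this lemma at all: it is quoted verbatim from Adachi--Tanaka \cite{AT}. So your proposal can only be judged on its own merits, and it does follow the genuine Adachi--Tanaka strategy: symmetrize to a radial nonincreasing function, split at the radius $R$ where $|u(R)|^2=M$, bound the exterior via the monotonicity of $(e^x-1)/x$ and the interior via the logarithmic radial estimate, and control $R^2$ by $\|u\|_{L^2}^2/(\pi M)$. All of those steps are sound (with the trivial remark that if $\sup|u|^2<M$ no such $R$ exists, but then the exterior-type estimate applies on all of $\mathbb{R}^2$ and one is done immediately).

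There is, however, one genuine error: the pointwise bound $|u(r)|^2\leq |u(R)|^2+\tfrac{1}{2\pi}\log(R/r)$ does not follow from Cauchy--Schwarz. What Cauchy--Schwarz actually gives is $u(r)\leq u(R)+\bigl(\tfrac{1}{2\pi}\log(R/r)\bigr)^{1/2}\|\nabla u\|_{L^2}$, and squaring produces a cross term $2u(R)\bigl(\tfrac{1}{2\pi}\log(R/r)\bigr)^{1/2}$ which you have silently dropped; it is unbounded as $r\to0$ and cannot simply be discarded. The standard repair is Young's inequality, $u(r)^2\leq(1+\varepsilon)u(R)^2+(1+\varepsilon^{-1})\tfrac{1}{2\pi}\log(R/r)$, which turns the interior integrand into $e^{\lambda(1+\varepsilon)M}(R/r)^{\lambda(1+\varepsilon^{-1})/(2\pi)}$, so convergence requires $\lambda(1+\varepsilon^{-1})<4\pi$ rather than your stated condition $\lambda/(2\pi)<2$. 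For each fixed $\lambda<4\pi$ one may choose $\varepsilon>\lambda/(4\pi-\lambda)$, so the full range is recovered and the constant still degenerates as $\lambda\to4\pi^{-}$; but the $4\pi$ threshold arises from this interplay between $\varepsilon$ and the exponent, not from the bare integral you wrote. With that correction inserted, the argument closes.
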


 Note that for $ \forall\alpha\geq1$,
$$
(e^{\lambda|u|^2}-1)^\alpha\leq e^{\lambda\alpha|u|^2}-1.
$$
By Lemma \ref{Lem2.03} and H\"{o}lder inequlity, for $\lambda\in(0,4\pi)$ and $\forall\beta\geq0$, we have
\begin{equation}\label{2.03}
\begin{split}
\int_{\mathbb{R}^2}\left( e^{\lambda|u|^2}-1\right)|u|^\beta dx
\leq &\|e^{\lambda|u|^2}-1\|_{L_x^{1/(1-\varepsilon)}(\mathbb{R}^2)}\|u\|^\beta_{L_x^{\beta/\varepsilon}(\mathbb{R}^2)}\\
\leq &\|u\|^2_{L^2(\mathbb{R}^2)}\|u\|^\beta_{H^1(\mathbb{R}^2)}
\leq  C(\lambda, \beta)\|u\|^2_{L^2(\mathbb{R}^2)}
\end{split}
\end{equation}
and thus
\begin{equation}\label{2.017}
\begin{split}
\int_{\mathbb{R}^2}\left( e^{\lambda|u|^2}-\lambda|u|^2-1\right)dx
\leq\lambda\int_{\mathbb{R}^2}\left( e^{\lambda|u|^2}-1\right)|u|^2 dx\leq C(\lambda)\|u\|^2_{L^2(\mathbb{R}^2)}.
\end{split}
\end{equation}
\begin{Lemma}\label{Lem2.1}{\rm(Strichartz estimates)}
For $s=0\ or\ 1$,
$$
2\leq r,p<\infty,\ \ \frac{1}{\gamma(p)}+\frac{1}{p}=\frac{1}{2},\ \  \frac{1}{p}+\frac{1}{p'}=1,
$$
(the pairs $(\gamma(p), p)$ were called admissible pairs) we have
\begin{gather}
\|e^{i(t-t_0)\triangle}u(t_0)\|_{L^{\gamma(p)}(\mathbb{R};H^s_p)}\leq C\|u(t_0)\|_{H^s(\mathbb{R}^2)},\\
\|\int_{t_0}^te^{i(t-\tau)\triangle}f(u(\tau))d\tau\|_{L^{\gamma(p)}(\mathbb{R};H^s_p)}
\leq C\|f(u)\|_{L^{\gamma(r)'}(\mathbb{R};H^s_{r'})}.
\end{gather}
\end{Lemma}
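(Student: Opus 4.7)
The plan is to derive both inequalities through the classical $TT^*$ argument applied to the free Schr\"odinger propagator on $\mathbb{R}^2$. The starting point is the explicit integral kernel $e^{it\Delta}(x,y) = (4\pi it)^{-1} e^{i|x-y|^2/4t}$, which immediately gives the dispersive estimate
$$\|e^{it\Delta} f\|_{L^\infty(\mathbb{R}^2)} \leq C|t|^{-1}\|f\|_{L^1(\mathbb{R}^2)}.$$
Combining this with the $L^2$ unitarity $\|e^{it\Delta}f\|_{L^2} = \|f\|_{L^2}$ and applying Riesz--Thorin interpolation yields
$$\|e^{it\Delta} f\|_{L^p(\mathbb{R}^2)} \leq C|t|^{-(1-2/p)}\|f\|_{L^{p'}(\mathbb{R}^2)}, \quad 2 \leq p \leq \infty.$$

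For the homogeneous estimate, I would set $Tu_0(t) := e^{i(t-t_0)\Delta} u_0$ and observe that the boundedness $T:L^2_x \to L^{\gamma(p)}_t L^p_x$ is equivalent, by duality, to the boundedness of the self-adjoint operator $TT^*F(t) = \int_{\mathbb{R}} e^{i(t-\tau)\Delta}F(\tau)\,d\tau$ from $L^{\gamma(p)'}_tL^{p'}_x$ to $L^{\gamma(p)}_tL^p_x$. Applying the dispersive estimate pointwise in time reduces this to a one-dimensional convolution with kernel $|t-\tau|^{-(1-2/p)}$, to which the Hardy--Littlewood--Sobolev inequality applies provided
$$\frac{1}{\gamma(p)} + \frac{1}{p} = \frac{1}{2},$$
which is exactly the admissibility relation in the statement. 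For the inhomogeneous estimate I would apply the Christ--Kiselev lemma to pass from the retarded integral $\int_{t_0}^t$ to the full integral $\int_{\mathbb{R}}$, since the target exponent $\gamma(p)$ and the dual source exponent $\gamma(r)'$ satisfy $\gamma(p) > \gamma(r)'$; the full-integral version is then handled by the same $TT^*$ computation applied to two (possibly different) admissible pairs.

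To upgrade from $s=0$ to $s=1$, I would note that the Fourier multiplier $(I-\Delta)^{s/2}$ commutes with both $e^{it\Delta}$ and the time integration, so applying it to every term and invoking the already-established $L^p$ estimates on $(I-\Delta)^{s/2}u$ transfers the result verbatim to the $H^s_p$ norms. The main subtlety would have been the endpoint $p=\infty$, equivalently $\gamma(p)=2$, which is the well-known failed endpoint of Strichartz in dimension two; however the statement explicitly restricts to $p,r < \infty$, so the HLS step above goes through without any Keel--Tao refinement, and no further obstacle arises.
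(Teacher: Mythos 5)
Your argument is correct and is the standard proof of the non-endpoint Strichartz estimates in two dimensions: dispersive estimate from the explicit kernel, interpolation with $L^2$ unitarity, the $TT^*$ reduction to a one-dimensional Hardy--Littlewood--Sobolev convolution (which is exactly where the admissibility relation $\frac{1}{\gamma(p)}+\frac{1}{p}=\frac{1}{2}$ arises), the factorization $T_pT_r^*$ together with the Christ--Kiselev lemma for the retarded integral with two different admissible pairs, and commutation of $(I-\triangle)^{s/2}$ to pass from $s=0$ to $s=1$. The paper states this lemma without proof as a classical fact, so there is nothing to compare against; your treatment, including the observation that the excluded case $p=\infty$ (equivalently $\gamma(p)=2$) is the only one requiring the Keel--Tao endpoint machinery, is complete and accurate.
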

\begin{Lemma}\label{B-Lem2.3}{\rm(Proposition 2.3, \cite{W-H})}
Let $1<r<p<\infty$ be fixed indices. Then for any $q\in[p,\infty)$
\begin{equation*}
\|u\|_{L^{q}(\mathbb{R}^2)}\leq C(p,r)q^{1/r'+p/rq}\|u\|^{p/q}_{L^{p}(\mathbb{R}^2)}\|u\|^{1-p/q}_{\dot{H}^{2/r}_r(\mathbb{R}^2)}.
\end{equation*}
\end{Lemma}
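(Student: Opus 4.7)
The plan is to prove this Gagliardo--Nirenberg--Trudinger type interpolation by a Littlewood--Paley decomposition followed by optimization in the frequency cutoff. First I would write $u=P_{\leq N}u+P_{>N}u$ for a dyadic frequency parameter $N$ to be chosen. The low-frequency piece is handled by Bernstein's inequality: since $p<q$, one has
\begin{equation*}
\|P_{\leq N}u\|_{L^q(\mathbb{R}^2)}\lesssim N^{2(1/p-1/q)}\,\|u\|_{L^p(\mathbb{R}^2)}.
\end{equation*}

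For the high frequencies I would decompose into dyadic shells $P_Mu$ with $M>N$. Bernstein gives $\|P_Mu\|_{L^q}\lesssim M^{2(1/r-1/q)}\|P_Mu\|_{L^r}$, and the identification $M^{2/r}\|P_Mu\|_{L^r}\lesssim\|(-\triangle)^{1/r}u\|_{L^r}=\|u\|_{\dot{H}^{2/r}_r}$ turns this into $\|P_Mu\|_{L^q}\lesssim M^{-2/q}\|u\|_{\dot{H}^{2/r}_r}$. Summing the geometric series over dyadic $M>N$ and then balancing the two contributions, i.e.\ choosing $N$ so that the low- and high-frequency estimates are comparable, produces an interpolation inequality of the claimed shape, homogeneous of the right degree under the scalings $u(\cdot)\mapsto\mu u(\lambda\cdot)$.

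The main obstacle is extracting the sharp $q$-dependent constant $q^{1/r'+p/(rq)}$. A naive dyadic sum gives a factor $(1-2^{-2/q})^{-1}\sim q/(2\log 2)$, hence a constant of order $q$, which for $r>1$ is strictly weaker than $q^{1/r'}$. The sharpening reflects the critical Sobolev embedding $\dot{H}^{2/r}_r(\mathbb{R}^2)\hookrightarrow\exp L^{r'}$ of Trudinger--Moser type; equivalently, $\|u\|_{L^Q(\mathbb{R}^2)}\lesssim Q^{1/r'}\|u\|_{\dot{H}^{2/r}_r}$ for large $Q$, which can be read off from the Taylor expansion of the Orlicz integral $\int(\exp(c|u|^{r'}/\|u\|_{\dot{H}^{2/r}_r}^{r'})-1)\,dx$. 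To realise the sharp constant I would therefore replace the crude sum over shells by this exponential integrability bound: first use the pointwise Bernstein bound $\|P_ju\|_{L^\infty}\lesssim 2^{2j/r}\|P_ju\|_{L^r}\lesssim\|u\|_{\dot{H}^{2/r}_r}$ uniformly in $j$, then combine $\|u\|_{L^q}\leq\|u\|_{L^p}^{p/q}\|u\|_{L^Q}^{1-p/q}$ with $\|u\|_{L^Q}\lesssim Q^{1/r'}\|u\|_{\dot{H}^{2/r}_r}$, and finally optimize the auxiliary exponent $Q$ in terms of $q$, $p$ and $r$ to recover the factor $q^{1/r'+p/(rq)}$ exactly.
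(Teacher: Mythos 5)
First, a point of comparison: the paper does not prove this lemma at all --- it is quoted verbatim as Proposition 2.3 of \cite{W-H}, so there is no in-paper argument to measure yours against. Judged on its own, your first paragraph (Littlewood--Paley splitting, Bernstein on each dyadic piece, balancing the cutoff $N$) is the standard skeleton and is sound as far as it goes, and you are right that after balancing it only yields a constant of size roughly $q^{1-p/q}\sim q$, which for $r>1$ is genuinely weaker than $q^{1/r'+p/(rq)}$ when $q$ is large. The diagnosis that the sharp power of $q$ is of Trudinger--Moser origin is also correct.

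The gap is in the repair. The inequality you propose to import, $\|u\|_{L^Q(\mathbb{R}^2)}\lesssim Q^{1/r'}\|u\|_{\dot{H}^{2/r}_r(\mathbb{R}^2)}$, is false: under $u\mapsto u(\lambda\cdot)$ the left side scales like $\lambda^{-2/Q}$ while $\|u\|_{\dot{H}^{2/r}_r}$ is scale-invariant, so no such bound can hold (indeed $\dot{H}^{2/r}_r(\mathbb{R}^2)$ embeds into BMO but into no $L^Q$; test the claim on a dilated bump function). The correct Trudinger/Ozawa-type endpoint estimate, the one actually read off from the Orlicz integral, is $\|u\|_{L^Q}\lesssim Q^{1/r'}\|u\|_{L^r}^{r/Q}\|u\|_{\dot{H}^{2/r}_r}^{1-r/Q}$, and because $r<p$ the extra $L^r$ factor cannot be traded for the $L^p$ and $\dot{H}^{2/r}_r$ norms, so your final combination does not close. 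Compounding this, the H\"older step $\|u\|_{L^q}\le\|u\|_{L^p}^{p/q}\|u\|_{L^Q}^{1-p/q}$ with those exact exponents is only valid for $Q=\infty$; for finite $Q$ the interpolation exponent is $\theta=\frac{p(Q-q)}{q(Q-p)}\neq p/q$, and no choice of $Q$ can restore the exponents $p/q$ and $1-p/q$ that scale invariance forces on the stated inequality. Hence the closing step, ``optimize $Q$ to recover $q^{1/r'+p/(rq)}$ exactly,'' is not a computation you could carry out. A viable argument keeps your dyadic pieces but treats the $O(q\log q)$ intermediate blocks via $\|P_ju\|_{L^q}\le\|P_ju\|_{L^p}^{p/q}\|P_ju\|_{L^\infty}^{1-p/q}\lesssim\|u\|_{L^p}^{p/q}\|u\|_{\dot{H}^{2/r}_r}^{1-p/q}$ and an $\ell^{r}$--$\ell^{r'}$ H\"older in $j$ (this is where the exponent $1/r'$ enters); extracting the clean power of $q$ from there is precisely the content of the proof in \cite{W-H}, which you would need to reproduce rather than gesture at.
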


As is shown in \cite{Gini85} and \cite{Naka99} , to obtain the scattering result, it  suffices to show that
any finite energy solution has a finite global space-time norm. So, if Theorem \ref{th1.2} is true,
 we only need to prove the following theorem.
\begin{Lemma}\label{Lem2.06}{\rm(Theorem \ref{th1.2} implies Theorem \ref{th1.1})}
Let u be a global solution of \eqref{1.1b},
 $\|u\|_{L_{t,x}^4(\mathbb{R}\times\mathbb{R}^2)}<\infty$. Then, for all admissible pairs, we have
\begin{equation}\label{2.10}
\|u\|_{L^{\gamma(p)}(\mathbb{R};H^1_{p})}< \infty.
\end{equation}
\end{Lemma}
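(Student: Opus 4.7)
The plan is a standard partition-and-bootstrap argument. Because $\|u\|_{L^{4}_{t,x}(\mathbb R\times\mathbb R^{2})}<\infty$, for any prescribed $\eta>0$ I can cut $\mathbb R$ into finitely many disjoint intervals $I_{1},\dots,I_{N}$ with $\|u\|_{L^{4}_{t,x}(I_{j}\times\mathbb R^{2})}\le\eta$ on each, and it suffices to bound each Strichartz norm on a single $I_{j}$ by a constant depending only on $E$, $\lambda$ and $\eta$. On a fixed interval $I=I_{j}=[t_{j},t_{j+1}]$, Duhamel's formula combined with Strichartz's estimate (Lemma \ref{Lem2.1}) yields, for any admissible pairs $(\gamma(p),p)$ and $(\gamma(r),r)$,
\[
\|u\|_{L^{\gamma(p)}(I;H^{1}_{p})}\le C\,\|u(t_{j})\|_{H^{1}}+C\,\|f(u)\|_{L^{\gamma(r)'}(I;H^{1}_{r'})},
\]
and mass/energy conservation gives $\|u(t_{j})\|_{H^{1}}^{2}\le M(u_{0})+H(u_{0})\le E$, so the free part is uniformly controlled. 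Everything reduces to proving a nonlinear estimate of the form
\[
\|f(u)\|_{L^{\gamma(r)'}(I;H^{1}_{r'})}\le C(E,\lambda)\,\|u\|_{L^{4}_{t,x}(I)}^{\theta}\,\|u\|_{L^{\gamma(p)}(I;H^{1}_{p})}
\]
for some $\theta>0$; the natural choice is $(\gamma(p),p)=(\gamma(r),r)=(4,4)$, whose dual space is $L^{4/3}_{t,x}$.

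To prove this nonlinear bound I split $f(u)=\lambda|u|^{2}u+g(u)$ with $g(u)=u(e^{\lambda|u|^{2}}-1-\lambda|u|^{2})$. The cubic part obeys the familiar 2D cubic-NLS estimate $\|\lambda|u|^{2}u\|_{L^{4/3}_{t}(I;H^{1}_{4/3})}\lesssim\lambda\|u\|_{L^{4}_{t,x}(I)}^{2}\|u\|_{L^{4}_{t}(I;H^{1}_{4})}$, which comes from H\"older and the product rule. For the higher-order remainder I use the pointwise bounds
\[
|g(u)|\le C|u|^{5}e^{\lambda|u|^{2}},\qquad |\nabla g(u)|\le C|u|^{4}\,e^{\lambda|u|^{2}}\,|\nabla u|,
\]
and apply H\"older in $x$ with exponents chosen so that one factor becomes an exponential moment of the form $\|e^{\alpha\lambda|u|^{2}}-1\|_{L^{1}_{x}}$ with $\alpha\lambda<4\pi$. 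The standing assumption $\lambda<4\pi(1-4\varepsilon)$ leaves room for such an $\alpha$, and Trudinger's inequality (Lemma \ref{Lem2.03}) combined with $\|\nabla u(t)\|_{L^{2}}^{2}\le H(u_{0})\le1$ bounds this moment uniformly in $t$ by $C(\lambda,\alpha)M(u_{0})\le C(\lambda,\alpha)E$. A further H\"older in $t$ distributes the remaining factors of $|u|$ and $|\nabla u|$, producing a strictly positive power of $\|u\|_{L^{4}_{t,x}(I)}$ alongside a single Strichartz norm $\|u\|_{L^{4}_{t}(I;H^{1}_{4})}$, which completes the estimate; Lemma \ref{B-Lem2.3} can supply the explicit $L^{q}$-growth control if one prefers to sum the Taylor expansion of $e^{\lambda|u|^{2}}$ term-by-term instead.

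Plugging this into the Strichartz inequality with $(\gamma(p),p)=(4,4)$ yields
\[
\|u\|_{L^{4}_{t}(I;H^{1}_{4})}\le CE^{1/2}+C(E,\lambda)\,\eta^{\theta}\,\|u\|_{L^{4}_{t}(I;H^{1}_{4})}.
\]
Choosing $\eta$ so small that $C(E,\lambda)\eta^{\theta}\le 1/2$ and absorbing the last term gives $\|u\|_{L^{4}_{t}(I;H^{1}_{4})}\le 2CE^{1/2}$ on every $I_{j}$; summing over $j$ produces $\|u\|_{L^{4}_{t}(\mathbb R;H^{1}_{4})}<\infty$. A single final application of Strichartz on $\mathbb R$, with the nonlinear estimate now bounded by this global $L^{4}_{t}H^{1}_{4}$ quantity, yields \eqref{2.10} for every admissible pair $(\gamma(p),p)$.

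The main technical obstacle is the nonlinear estimate for the remainder $g(u)$: the H\"older split must simultaneously (i) keep $\alpha\lambda<4\pi$ so that Trudinger applies, (ii) produce a strictly positive power $\theta>0$ of $\|u\|_{L^{4}_{t,x}(I)}$ so the bootstrap closes on each sub-interval, and (iii) leave the $|\nabla u|$ factor inside a legitimate Strichartz norm. The slack $\varepsilon>0$ in $\lambda<4\pi(1-4\varepsilon)$ from the Remark in Section 1 is exactly what makes these three constraints compatible.
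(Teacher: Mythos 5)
Your overall skeleton (Bourgain splitting of $\mathbb{R}$ into intervals where $\|u\|_{L^4_{t,x}}\leq\eta$, Strichartz plus a nonlinear estimate that carries a positive power of $\|u\|_{L^4_{t,x}}$, Trudinger for the exponential part) is the same as the paper's, but there is a genuine gap in the exponent bookkeeping for the remainder $g(u)$: you cannot close the estimate in the single dual norm $L^{4/3}_t(I;H^1_{4/3})$ for the whole range $0<\lambda<4\pi$. For the worst term $\bigl(e^{\lambda|u|^2}-1-\lambda|u|^2\bigr)\nabla u$ the spatial H\"older budget in $L^{4/3}_x$ is $\tfrac34=\theta_1+\theta_2$, with the exponential factor in $L^{1/\theta_1}_x$ and $\nabla u$ in $L^{1/\theta_2}_x$; Trudinger forces $\lambda/\theta_1<4\pi$, i.e. $\theta_1>\lambda/(4\pi)$, which is incompatible with $\theta_1\leq 3/4$ once $\lambda\geq 3\pi$ (and already fails at $\lambda\geq 2\pi$ if you insist on $\nabla u\in L^4_x$ so that $\theta_2=1/4$). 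Extracting powers $|u|^{-\beta}$ does not help, since the exponential growth rate, and hence the constraint on $\theta_1$, is unchanged. This is precisely why the paper works with the pair of norms $X=L^{2/(1-2\varepsilon)}(I;H^1_{1/\varepsilon})$ and $Y=L^4(I;H^1_4)$ and estimates the remainder in the dual admissible space $L^{2/(1+2\varepsilon)}(I;H^1_{1/(1-\varepsilon)})$: there the spatial budget is $1-\varepsilon$, so after placing $\nabla u$ in $L^{1/\varepsilon}_x$ and $|u|^{8\varepsilon}$ in $L^4_x$ the exponential factor lands in $L^{1/(1-4\varepsilon)}_x$, and $\lambda/(1-4\varepsilon)<4\pi$ is exactly the standing assumption. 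Your cubic-term estimate and the final absorption/interpolation steps are fine once this second norm is added to the bootstrap quantity.

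A secondary issue: your fallback of summing the Taylor expansion term by term via Lemma \ref{B-Lem2.3} with $r=2$ does not converge unconditionally. The $k$-th term carries a factor of order $(Ck)^{k}/k!\sim (Ce)^{k}$ times $\|\nabla u\|_{L^2}^{2k}$, so the series only closes under a smallness condition on $\lambda\|\nabla u\|_{L^2}^2$ that the hypothesis $H(u_0)\leq 1$ does not provide; the Trudinger route is not optional here, it is the mechanism that replaces this divergent resummation.
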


\begin{proof}
Defining  $X=L^{2/(1-2\varepsilon)}(I;H^1_{1/\varepsilon})$,
$Y=L^4(I;H^1_4)$,
by Strichartz estimates, \eqref{2.03} and \eqref{2.017},
\begin{equation}\label{2.038}
\begin{split}
&\|u\|_{X\cap Y}\\
\leq& C\|u(S)\|_{H^1(\mathbb{R}^2)}+C\|\lambda|u|^2u\|_{L^{4/3}(I;H^1_{4/3})}\\
&+C\|(e^{\lambda|u|^2}-\lambda|u|^2-1)u\|_{L^{2/(1+2\varepsilon)}(I;H^1_{1/(1-\varepsilon)})}\\
\leq& C(E)+\lambda C\|u\|^2_{L^4_{t,x}(I\times\mathbb{R}^2))}\|u\|_Y\\
&+\lambda C\|u\|^{8\varepsilon}_{L^4_{t,x}(I\times\mathbb{R}^2))}\|u\|_X
\|(e^{\lambda|u|^2}-\lambda|u|^2-1)u^{-8\varepsilon}\|_{L_t^\infty L_x^{1/(1-4\varepsilon)}(I\times\mathbb{R}^2)}\\
\leq& C(E)+ C(E)(\|u\|^2_{L^4_{t,x}(I\times\mathbb{R}^2))}+\|u\|^{8\varepsilon}_{L^4_{t,x}(I\times\mathbb{R}^2))})
\|u\|_{X\cap Y}.
\end{split}
\end{equation}
Using the same way as in Bourgain \cite{Bourgain99}, one can split $\mathbb{R}$ into finitely many pairwise disjoint intervals
\begin{equation}
\mathbb{R}=\bigcup\nolimits_{j=1}^JI_j,\ \ \ \|u\|_{L_{t,x}^4(I_j\times\mathbb{R}^2)}\leq\eta,\ \ \ C(E)(\eta^2+\eta^{8\varepsilon})\leq 1/2.
\end{equation}
By \eqref{2.038},
\begin{equation}
\|u\|_{L^{2/(1-2\varepsilon)}(I_j;H^1_{1/\varepsilon})\cap L^4(I_j;H^1_4)}\leq C(E) .
\end{equation}
As $\varepsilon\in(0,1/8)$ can be chosen small arbitrarily, by interpolation,
\begin{equation}
\|u\|_{L^{\gamma(p)}(I_j;H^1_{p})}\leq C(E),
\end{equation}
for all admissible pairs and $j=1,2,\cdots, J$. The desire result follows.
\end{proof}

\section{Stability}

\begin{Lemma} \label{B-Lem1.4}{\rm(Stability)}
For any $A>0$ and $\sigma>0$, there exists $\delta>0$ with the following property: if $u:I\times\mathbb{R}^2\rightarrow \mathbb{C}$ satisfies
$\|u\|_{L^4_{t,x}(I\times\mathbb{R}^2)}\leq A$ and approximately solves \eqref{1.1b} in the sense that
\begin{equation}\label{B-1.4}
\|\int_{t_0}^te^{i(t-\tau)\triangle}(iu_t+\triangle u-f(u))(\tau)d\tau\|_{L_{t,x}^4\cap L_t^{2/(1-2\varepsilon)}L_{x}^{1/\varepsilon}(I\times\mathbb{R}^2)}\leq \delta;
\end{equation}
and $v_0\in H^1(\mathbb{R}^2)$ satifies $H(v_0)\leq 1$ and $\|e^{i(t-t_0)\triangle}(u(t_0)-v_0)\|_{L^4_{t,x}(I\times\mathbb{R}^2)}\leq \delta$ for some $t_0\in I$,
then there exists a solution $v:I\times\mathbb{R}^2\rightarrow \mathbb{C}$ to \eqref{1.1b} with $v(t_0)=v_0$ such that
$ \|u-v\|_{L^4_{t,x}(I\times\mathbb{R}^2)}\leq \sigma$.
\end{Lemma}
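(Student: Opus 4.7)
The plan is a standard perturbation argument: partition $I$ into a bounded number of subintervals on which $\|u\|_{L^4_{t,x}}$ is small, and on each subinterval carry out a contraction for $w:=v-u$ using the Strichartz estimates of Lemma \ref{Lem2.1} together with a nonlinear difference estimate for $f(v)-f(u)$. The only departure from the polynomial-NLS template is that the difference of nonlinearities must be handled via the Trudinger-type bounds \eqref{2.03}--\eqref{2.017} rather than by direct power estimates.

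I would begin by upgrading the hypothesis $\|u\|_{L^4_{t,x}(I\times\mathbb{R}^2)}\leq A$ to a bound $\|u\|_{X\cap Y}\leq C(A,E)$, where $X=L^{2/(1-2\varepsilon)}(I;H^1_{1/\varepsilon})$ and $Y=L^4(I;H^1_4)$, by repeating the bootstrap \eqref{2.038} on subintervals where $\|u\|_{L^4_{t,x}}$ is small; the approximate-solution error from \eqref{B-1.4} contributes only $C\delta$, harmless for small $\delta$. Then partition $I=\bigcup_{j=1}^J I_j$ with $\|u\|_{L^4_{t,x}(I_j\times\mathbb{R}^2)}\leq \eta$ for $\eta$ to be fixed small. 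On each $I_j$, writing $e:=iu_t+\triangle u-f(u)$, the Duhamel formula for $w$ reads
\begin{equation*}
w(t)=e^{i(t-t_j)\triangle}w(t_j)-i\int_{t_j}^{t}e^{i(t-\tau)\triangle}\bigl(f(v)-f(u)-e\bigr)(\tau)\,d\tau,
\end{equation*}
where the free term is controlled by hypothesis on the first $I_j$ and inductively on the rest, and the $e$-term is controlled by \eqref{B-1.4}. For the nonlinear difference I would use the splitting
\begin{equation*}
f(v)-f(u)=(e^{\lambda|v|^2}-1)w+u\bigl(e^{\lambda|v|^2}-e^{\lambda|u|^2}\bigr),
\end{equation*}
combined with the pointwise bound $|e^{\lambda|v|^2}-e^{\lambda|u|^2}|\leq \lambda(|u|+|v|)(e^{\lambda|u|^2}+e^{\lambda|v|^2})|w|$, and then separate a cubic piece (placed in $L_t^{4/3}H^1_{4/3}$ via Hölder, picking up $\|u\|^2_{L^4_{t,x}(I_j)}+\|v\|^2_{L^4_{t,x}(I_j)}\lesssim\eta^2$) from a higher-order exponential remainder (placed in $L^{2/(1+2\varepsilon)}(I_j;H^1_{1/(1-\varepsilon)})$ via \eqref{2.03}, picking up $\|u\|^{8\varepsilon}+\|v\|^{8\varepsilon}$ in $L^4_{t,x}$). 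This yields
\begin{equation*}
\|w\|_{X\cap Y}(I_j)\leq C\delta+C(E)(\eta^2+\eta^{8\varepsilon})\|w\|_{X\cap Y}(I_j),
\end{equation*}
which closes to $\|w\|_{X\cap Y}(I_j)\leq 2C\delta$ as soon as $\eta$ is chosen with $C(E)(\eta^2+\eta^{8\varepsilon})\leq 1/2$.

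The main obstacle is keeping $\|\nabla u(t)\|_{L^2}$ and $\|\nabla v(t)\|_{L^2}$ strictly below the Trudinger threshold so that \eqref{2.03} remains available throughout the argument. For $v$ this is automatic from energy conservation and $H(v_0)\leq 1$. For $u$, since $u$ is merely an approximate solution, one must check that the Hamiltonian is almost conserved: a standard energy identity against $\bar u_t$ combined with \eqref{B-1.4} shows $H(u(t))=H(u(t_0))+O(\delta)$, while $H(u(t_0))\leq 1+O(\delta)$ follows from comparing $u(t_0)$ with $v_0$ through the smallness of $\|e^{i(t-t_0)\triangle}(u(t_0)-v_0)\|_{L^4_{t,x}}$. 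Because $\varepsilon$ is a free parameter and $0<\lambda<4(1-4\varepsilon)\pi$ by assumption, the small loss stays below the critical exponent. Finally, iterating the per-interval estimate propagates $\delta$ by a multiplicative factor at most $C^J$, so choosing $\delta=\delta(A,\sigma,E,J)$ small enough delivers $\|w\|_{L^4_{t,x}(I\times\mathbb{R}^2)}\leq \sigma$.
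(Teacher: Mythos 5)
Your argument is essentially the paper's: write $w=v-u$, estimate the Duhamel term for $f(v)-f(u)$ by splitting off the cubic part (measured in $L^{4/3}_{t,x}$, gaining $\|u\|^2_{L^4_{t,x}}+\|v\|^2_{L^4_{t,x}}$) from the exponential remainder (measured via \eqref{2.03}, gaining the $8\varepsilon$ powers), close by a continuity argument when the $L^4_{t,x}$ norm of $u$ is small, and otherwise subdivide $I$ and iterate. This is exactly the chain \eqref{B-1.5}--\eqref{B-1.6}. The paper works in the derivative-free space $L^4_{t,x}\cap L_t^{2/(1-2\varepsilon)}L_x^{1/\varepsilon}$ rather than your $H^1$-based $X\cap Y$, but that is a cosmetic difference.

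The one substantive place you go beyond the paper is the Trudinger-threshold issue, and there your fix does not work as stated. Smallness of $\|e^{i(t-t_0)\triangle}(u(t_0)-v_0)\|_{L^4_{t,x}}$ gives no control on $\|\nabla(u(t_0)-v_0)\|_{L^2}$ or on $H(u(t_0))$: the difference can be tiny in that Strichartz norm while having arbitrarily large energy. Likewise, the almost-conservation identity obtained by pairing the equation with $\bar u_t$ cannot be closed from \eqref{B-1.4} alone, which only bounds the time-integrated Duhamel contribution of the error $e$ in $L^4_{t,x}\cap L_t^{2/(1-2\varepsilon)}L_x^{1/\varepsilon}$ and says nothing about $\langle e,\bar u_t\rangle$. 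You are right that some uniform $H^1$ bound on the approximate solution $u$ is needed for the exponential-remainder estimate (it is what makes the $L_t^\infty L_x^{1/(1-4\varepsilon)}$ factor in \eqref{2.038} finite via Lemma \ref{Lem2.03}); the paper's proof uses this silently, and in the application in Section 5 it is supplied externally by the construction of $\tilde u_n$ from nonlinear profiles with bounded energy. The clean repair is to add to the hypotheses a bound of the form $\sup_{t\in I}\|u(t)\|_{H^1}\leq E_0$ (with the constants then depending on $E_0$), not to try to derive such a bound from \eqref{B-1.4} and the $L^4_{t,x}$ closeness of the data.
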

\begin{proof}
Let $v:\mathbb{R}\times\mathbb{R}^2\rightarrow \mathbb{C}$ be the global
solution with the initial data $v(t_0)=v_0$. Denote $v=u+w$ on the interval $I$, then
\begin{equation}\label{B-1.5}
iw_t+\triangle w=(f(u+w)-f(u))-(iu_t+\triangle u-f(u))
\end{equation}
and $\|e^{i(t-t_0)\triangle}w(t_0)\|_{L_{t,x}^4(I\times\mathbb{R}^2)}\leq \delta$. Let $X=L_{t,x}^4\cap L_t^{2/(1-2\varepsilon)}L_x^{1/\varepsilon}(I\times\mathbb{R}^2)$, by Strichartz estimates, \eqref{B-1.4} and triangle inequality, we have
\begin{equation}\label{B-1.6}
\begin{split}
\|w\|_X&\lesssim \delta^{2\varepsilon/(1-2\varepsilon)}+\|\int_{t_0}^te^{i(t-\tau)\triangle}(f(u+w)-f(u))(\tau)d\tau\|_X+ \delta\\
&\lesssim2\delta^{2\varepsilon/(1-2\varepsilon)}+(\|u\|^2_{L_{t,x}^4(I\times\mathbb{R}^2)}+\|u+w\|^2_{L_{t,x}^4(I\times\mathbb{R}^2)})
\|w\|_{L_{t,x}^4(I''\times\mathbb{R}^2)}\\
&+(\|u\|^{8\varepsilon}_{L_{t,x}^4(I\times\mathbb{R}^2)}+
\|u+w\|^{8\varepsilon}_{L_{t,x}^4(I\times\mathbb{R}^2)})\|w\|_{L_t^{2/(1-2\varepsilon)}L_{x}^{1/\varepsilon}(I\times\mathbb{R}^2)}\\
&\lesssim 2\delta^{2\varepsilon/(1-2\varepsilon)} +(2A^2+2A^{8\varepsilon})\|w\|_X+\|w\|_X^{1+8\varepsilon}+\|w\|_X^3.
\end{split}
\end{equation}
When $A$ and $\delta=\delta(\sigma)$ both are sufficiently small,  standard continuity argument gives $\|w\|_X\leq \sigma$.
When $A$ is large, we only need to subdivide the time interval $I$ and then the result follows by an iterate process.
\end{proof}


\section{Linear Profile Decomposition}

In this section, we will give the linear profile decomposition for Schr\"{o}dinger equation in $H^1(\mathbb{R}^2)$.
First, we give some definitions and lemmas.
\begin{Definition}\label{B-Def 1.2}{\rm(Symmetry group, \cite{B-T-M-Z})}
For any phase $\theta\in\mathbb{R}/2\pi\mathbb{Z}$, position $x_0\in\mathbb{R}^2$, frequency $\xi_0\in\mathbb{R}^2$,
and scaling parameter $\lambda>0$, we define the unitary transformation $g_{\theta,\xi_0,x_0,\lambda}:L_x^2(\mathbb{R}^2)\rightarrow L_x^2(\mathbb{R}^2)$ by the formula
\begin{equation*}
g_{\theta,\xi_0,x_0,\lambda}f(x):=\frac{1}{\lambda}e^{i\theta}e^{ix\cdot\xi_0}f(\frac{x-x_0}{\lambda}).
\end{equation*}
We let $G$ be the collection of such transformations; this is a group with identity $g_{0,0,0,1}$,
inverse $g^{-1}_{\theta,\xi_0,x_0,\lambda}=g_{-\theta-x_0\xi_0,-\lambda\xi_0,-x_0/\lambda,1/\lambda}$ and group law
\begin{equation*}
g_{\theta,\xi_0,x_0,\lambda}g_{\theta',\xi'_0,x'_0,\lambda'}=g_{\theta+\theta'-x_0\xi'_0/\lambda,\xi_0+\xi'_0/\lambda,x_0+\lambda x'_0,\lambda\lambda'}.
\end{equation*}

We let $G\backslash L_x^2(\mathbb{R}^2)$
be the modulo space of $G$-orbits $Gf:=\{gf:g\in G\}$ of $L_x^2(\mathbb{R}^2)$, endowed with the usual quotient topology.
If $u:I\times\mathbb{R}^2\rightarrow\mathbb{C}$ is a function, we define $T_{g_{\theta,\xi_0,x_0,\lambda}}u:\lambda^2I\times\mathbb{R}^2\rightarrow\mathbb{C}$ where $\lambda^2I:=\{\lambda^2t:t\in I\}$
by the formula
\begin{equation*}
(T_{g_{\theta,\xi_0,x_0,\lambda}}u)(t,x)
:=\frac{1}{\lambda}e^{i\theta}e^{ix\cdot\xi_0}e^{-it|\xi_0|^2}u(\frac{t}{\lambda^2},\frac{x-x_0-2\xi_0t}{\lambda}),
\end{equation*}
or equivalently
\begin{equation*}
(T_{g_{\theta,\xi_0,x_0,\lambda}}u)(t)=g_{\theta-t|\xi_0|^2,\xi_0,x_0+2\xi_0t,\lambda}(u(\frac{t}{\lambda^2})).
\end{equation*}
\end{Definition}
If $g\in G$, we can easily prove that
 $M(T_gu)=M(u)$ and $S(T_gu)=S(u)$.
\begin{Definition}\label{B-Def 1.3}{\rm(Enlarged group, \cite{B-T-M-Z})}
For any phase $\theta\in\mathbb{R}/2\pi\mathbb{Z}$, position $x_0\in\mathbb{R}^2$, frequency $\xi_0\in\mathbb{R}^2$,
scaling parameter $\lambda>0$, and time $t_0$, we define the unitary transformation $g_{\theta,\xi_0,x_0,\lambda,t_0}:L_x^2(\mathbb{R}^2)\rightarrow L_x^2(\mathbb{R}^2)$ by the formula
\begin{equation*}
g_{\theta,\xi_0,x_0,\lambda,t_0}=g_{\theta,\xi_0,x_0,\lambda}e^{it_0\triangle},
\end{equation*}
or in other words
\begin{equation*}
g_{\theta,\xi_0,x_0,\lambda,t_0}f(x):=\frac{1}{\lambda}e^{i\theta}e^{ix\cdot\xi_0}(e^{it_0\triangle}f)(\frac{x-x_0}{\lambda}).
\end{equation*}
Let $G'$ be the collection of such transformations. We also let $G'$ act on global space-time function $u:\mathbb{R}\times\mathbb{R}^2\rightarrow\mathbb{C}$ by defining
\begin{equation*}
T_{g_{\theta,\xi_0,x_0,\lambda,t_0}}u(t,x):=
\frac{1}{\lambda}e^{i\theta}e^{ix\cdot\xi_0}e^{-it|\xi_0|^2}(e^{it_0\triangle}u)(\frac{t}{\lambda^2},\frac{x-x_0-2\xi_0t}{\lambda}),
\end{equation*}
or equivalently
\begin{equation*}
(T_{g_{\theta,\xi_0,x_0,\lambda,t_0}}u)(t)=g_{\theta-t|\xi_0|^2,\xi_0,x_0+2\xi_0t,\lambda,t_0}(u(\frac{t}{\lambda^2})).
\end{equation*}
\end{Definition}

\begin{Lemma} \label{B-Lem1.5}{\rm(Linear profiles for $L^2$ sequence, \cite{B-M-V})}
Let $u_n$ be a bounded sequence in $L_x^2(\mathbb{R}^2)$. Then (after passing to a subsequence if necessary) there exists a family $\phi^{(j)}$, $j=1,2,\cdots$ of functions in  $L_x^2(\mathbb{R}^2)$ and group elements $g_n^{(j)}\in G'$ for $j,n=1,2,\cdots$ such that we have the decomposition
\begin{equation}\label{B-1.7}
u_n=\sum_{j=1}^lg_n^{(j)}\phi^{(j)}+w_n^{(l)}
\end{equation}
for all $l=1,2,\cdots$; here $w_n^{(l)}\in L_x^2(\mathbb{R}^2)$ is such that its linear evolution has asymptotically vanishing scattering size:
\begin{equation}\label{B-1.8}
\lim_{l\rightarrow\infty}\limsup_{n\rightarrow\infty}S(e^{it\triangle}w_n^{(l)})=0.
\end{equation}
Moreover, for any $j\neq j'$,
\begin{equation}\label{N-2}
\frac{\lambda_n^{(j')}}{\lambda_n^{(j)}}+\frac{\lambda_n^{(j)}}{\lambda_n^{(j')}}
+\lambda_n^{(j)}\lambda_n^{(j')}|\xi_n^{(j)}-\xi_n^{(j')}|^2
+\frac{|x_n^{(j)}-x_n^{(j')}|^2}{\lambda_n^{(j)}\lambda_n^{(j')}}
+\frac{|t_n^{(j)}(\lambda_n^{(j)})^2-t_n^{(j')}(\lambda_n^{(j')})^2|}{\lambda_n^{(j)}\lambda_n^{(j')}}
\rightarrow \infty.
\end{equation}
Furthermore, for any $l\geq 1$ we have the mass decoupling property
\begin{equation}\label{B-1.9}
\lim_{n\rightarrow\infty}[M(u_n)-\sum_{j=1}^lM(\phi^{(j)})-M(w_n^{(l)})]=0;
\end{equation}
for any $j\leq l$, we have
\begin{equation}\label{N-3}
 (g_n^{(j)})^{-1}w_n^{(l)}\rightharpoonup 0, \ \ weakly\ in\ L_x^2(\mathbb{R}^2).
\end{equation}

\end{Lemma}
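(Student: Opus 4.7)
The plan is to prove the decomposition by an inductive extraction procedure driven by an inverse Strichartz inequality for the free Schr\"odinger evolution in $L^4_{t,x}(\mathbb{R}\times\mathbb{R}^2)$. Set $M := \sup_n \|u_n\|_{L^2_x}$ and $\eta_0 := \limsup_n S(e^{it\triangle} u_n)^{1/4}$. If $\eta_0 = 0$ the decomposition is trivial with $l = 0$ and $w_n^{(0)} := u_n$; otherwise profiles are extracted one at a time.

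The heart of the argument is the following inverse Strichartz statement, which I would establish first: if $v_n$ is bounded in $L^2_x$ with $\limsup_n \|e^{it\triangle} v_n\|_{L^4_{t,x}} \geq \eta > 0$, then, after passing to a subsequence, there exist parameters $(\theta_n,\xi_n,x_n,\lambda_n,t_n)$ and a nontrivial $\phi\in L^2_x$ such that, writing $g_n := g_{\theta_n,\xi_n,x_n,\lambda_n,t_n}\in G'$, one has $(g_n)^{-1}v_n\rightharpoonup \phi$ weakly in $L^2_x$ with $\|\phi\|_{L^2_x}\geq c(M,\eta)>0$. The standard route in two dimensions is via a refined bilinear Strichartz estimate of Moyua--Vargas--Vega together with a Whitney-type frequency decomposition: pigeonholing produces a dyadic frequency cube of side $\lambda_n^{-1}$ carrying a definite proportion of the $L^4_{t,x}$ mass, a second pigeonholing locates a space--time point $(t_n,x_n)$ of concentration, and then the rescaled, modulated and shifted sequence $g_n^{-1}v_n$ converges weakly along a subsequence to a profile $\phi$ whose $L^2$ norm is bounded below by $\eta^{C}M^{-C'}$ for absolute constants $C,C'>0$.

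Given the extraction lemma, I would iterate: set $\phi^{(1)} := \phi$, $g_n^{(1)}:=g_n$ applied to $v_n := u_n$, and $w_n^{(1)} := u_n - g_n^{(1)}\phi^{(1)}$. Unitarity of $g_n^{(1)}$ combined with $(g_n^{(1)})^{-1}u_n\rightharpoonup \phi^{(1)}$ yields the Pythagorean expansion
\begin{equation*}
M(u_n) = M(\phi^{(1)}) + M(w_n^{(1)}) + o(1) \qquad \text{as } n \to \infty.
\end{equation*}
Applying the extraction lemma to $w_n^{(1)}$ (whenever the Strichartz size of its free evolution is still positive) yields $\phi^{(2)},g_n^{(2)}$, and one continues inductively. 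The asymptotic orthogonality \eqref{N-2} between $g_n^{(j)}$ and $g_n^{(j')}$ for $j<j'$ is proved by contradiction: if the left-hand side of \eqref{N-2} stayed bounded along some subsequence, then $(g_n^{(j)})^{-1}g_n^{(j')}$ would converge in the strong operator topology to some $g_\infty\in G'$, and the weak convergence \eqref{N-3} at level $j'-1$ would force $\phi^{(j')} = g_\infty^{-1}\cdot 0 = 0$, contradicting the extraction lower bound $\|\phi^{(j')}\|_{L^2}\gtrsim c(M,A_{j'-1})>0$. The weak convergence \eqref{N-3} and the mass decoupling \eqref{B-1.9} then propagate through the induction via this orthogonality and the unitarity of the $g_n^{(j)}$.

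The termination \eqref{B-1.8} follows from mass conservation: writing $A_j := \limsup_n S(e^{it\triangle} w_n^{(j)})^{1/4}$, each extracted profile satisfies $M(\phi^{(j)}) \gtrsim A_{j-1}^{C}M^{-C'}$, while $\sum_j M(\phi^{(j)})\leq M^{2}$ by \eqref{B-1.9}, so $M(\phi^{(j)})\to 0$ and hence $A_j\to 0$ as $j\to\infty$. The main technical obstacle is the inverse Strichartz step itself: the exponent $4$ is the conformal Strichartz exponent in two space dimensions, so the full noncompactness of $G'$ (translations in space, time and frequency together with parabolic dilations, phase and Galilean boost) must be unwound simultaneously before any weak limit becomes nontrivial. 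Once the profile extraction is in place, the rest of the lemma, namely the iteration, the orthogonality of the parameters, the mass decoupling and the vanishing of the asymptotic Strichartz size, is standard bookkeeping in the Keraani--B\'egout--Vargas framework.
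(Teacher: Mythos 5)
The paper does not prove this lemma at all; it is quoted verbatim from Merle--Vega \cite{B-M-V} (with the bilinear/orthogonality refinements of B\'egout--Vargas), and your sketch --- inverse Strichartz extraction via the refined bilinear estimate, iterated Pythagorean decoupling, orthogonality of parameters by contradiction, and termination from summability of the profile masses --- is precisely the standard argument of those cited references. So your proposal is correct and follows essentially the same route as the paper's source.
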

\begin{Remark}
If the orthogonal condition \eqref{N-2} holds, then (see \cite{B-M-V})
\begin{align*}
\begin{aligned}
&\lim_{n\rightarrow\infty}\langle g_{n}^{(j)}\phi^{(j)}, g_{n}^{(j')}\phi^{(j')}\rangle_{L^2(\mathbb{R}^2)}=0, \ \ j\neq j',\\
&\lim_{n\rightarrow\infty}\langle g_{n}^{(j)}\phi^{(j)}, w_{n}^{(l)}\rangle_{L^2(\mathbb{R}^2)}=0.
\end{aligned}
\end{align*}
Moreover, if $v^{(j)},v^{(j')}\in L_{t,x}^4(\mathbb{R}\times\mathbb{R}^2)$, then (see \cite{B-B-G}, \cite{B-M-V}), for any $0<\theta<1$
\begin{equation}\label{B-1.10}
\lim_{n\rightarrow\infty}\||T_{g_n^{(j)}}v^{(j)}|^{1-\theta}|T_{g^{(j')}_n}v^{(j')}|^\theta\|_{L_{t,x}^4(\mathbb{R}\times\mathbb{R}^2)}=0;
\end{equation}
if $v^{(1)},\cdots,v^{(l)}\in L_{t,x}^4(\mathbb{R}\times\mathbb{R}^2)$, then (see \cite{B-B-V}, Lemma 5.5)
\begin{equation}\label{B-1.11}
\lim_{n\rightarrow\infty}S(\sum_{j=1}^lg_n^{(j)}v^{(j)})\leq\sum_{j=1}^lS(v^{(j)}).
\end{equation}
\end{Remark}
\begin{Remark}
As each linear profile $\phi^{(j)}$ in Lemma \ref{B-Lem1.5} is constructed in the sense that
\begin{equation*}
e^{it_n^{(j)}\triangle}\{e^{i\xi_n^{(j)}\cdot x}\lambda_n^{(j)}u_n(\lambda_n^{(j)}x)\}(x+x_n^{(j)})\rightharpoonup \phi^{(j)}
\end{equation*}
weakly in $L_x^2(\mathbb{R}^2)$(see \cite{B-M-V}),
after passing to a  subsequence in $n$, rearrangement, translation, and refining $\phi^{(j)}$ accordingly,
we may assume that the parameters satisfy the following:

1) $t_n^{(j)}\rightarrow \pm\infty$ as $n\rightarrow\infty$, or $t_n^{(j)}\equiv 0$ for all $n,j$;

2)  $\lambda_n^{(j)}\rightarrow 0$ or $\infty$ as $n\rightarrow\infty$, or $\lambda_n^{(j)}\equiv 1$ for all $n,j$;

3)  $|\xi_n^{(j)}|\rightarrow \infty$ as $n\rightarrow\infty$, or $\xi_n^{(j)}\equiv \xi^{(j)}$ with $|\xi^{(j)}|< \infty$.

4)  When $\lambda_n^{(j)}\equiv 1$, $\xi_n^{(j)}\equiv \xi^{(j)}$ and $|\xi^{(j)}|< \infty$, we can let $\xi^{(j)}\equiv0$.
\end{Remark}

Our main result in this section is the following lemma:
\begin{Lemma} \label{N2-Lem1}{\rm(Linear profiles for $H^1$ sequence)}
Let $u_n$ be a bounded sequence in $H^1(\mathbb{R}^2)$. Then up to a subsequence, for any $J\geq1$,
there exists a sequence $\phi_{\alpha}$ in $H^1(\mathbb{R}^2)$
and a sequence of group elements $g_{n\alpha}=g_{\theta_{n\alpha},\xi_{n\alpha},x_{n\alpha},\lambda_{n\alpha},t_{n\alpha}}\in G'$ such that
\begin{equation}
\begin{split}\label{N2-21}
u_n=&\sum_{\alpha=1}^Jg_{n\alpha}\phi_\alpha+R(n,J).
\end{split}
\end{equation}
Here, for each $\alpha$, $\lambda_{n\alpha}$ and $\xi _{n\alpha}$ must satisfy
\begin{equation}\label{N2-20}
 \lambda_{n\alpha}\equiv1 \ and\  \xi _{n\alpha}\equiv0, or \lambda_{n\alpha}\rightarrow\infty;
\end{equation}
$R(n,J)\in H^1(\mathbb{R}^2)$ is such that
\begin{equation}\label{N2-1}
\lim_{J\rightarrow\infty}\limsup_{n\rightarrow\infty}S(e^{it\triangle}R(n,J))=0.
\end{equation}
Moreover, for any $\alpha\neq \alpha'$, one has the same orthogonal conditions as \eqref{N-2}.
For any $J\geq 1$, one has the following decoupling properties
\begin{equation}\label{N2-2}
\lim_{n\rightarrow\infty}\{\|u_n\|^2_{L^2(\mathbb{R}^2)}-\sum_{\alpha=1}^J
\|\phi_\alpha\|^2_{L^2(\mathbb{R}^2)}-\|R(n,J)\|^2_{L^2(\mathbb{R}^2)}\}=0,
\end{equation}
\begin{equation}\label{N2-3}
\lim_{n\rightarrow\infty}\{\|u_n\|^2_{\dot{H}^1(\mathbb{R}^2)}-\sum_{\alpha=1}^J
\|g_{n\alpha}\phi_\alpha\|^2_{\dot{H}^1(\mathbb{R}^2)}-\|R(n,J)\|^2_{\dot{H}^1(\mathbb{R}^2)}\}=0,
\end{equation}
\begin{equation}\label{N2-4}
\lim_{J\rightarrow\infty}\limsup_{n\rightarrow\infty}\{H(u_n)-
\sum_{\alpha=1}^J
H(g_{n\alpha}\phi_\alpha)-H(R(n,J))
\}=0.
\end{equation}

\end{Lemma}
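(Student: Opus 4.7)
The plan is to upgrade the $L^2$ profile decomposition of Lemma~\ref{B-Lem1.5} to an $H^1$ profile decomposition by exploiting the $H^1$-boundedness of $u_n$ to rule out the ``bad'' scaling regimes. Since an $H^1$-bounded sequence is $L^2$-bounded, Lemma~\ref{B-Lem1.5} applied to $u_n$, together with the refinement of parameters in the remark following it, provides profiles $\phi^{(j)}\in L_x^2$ and group elements $g_n^{(j)}\in G'$ with parameters obeying the trichotomy $\lambda_n^{(j)}\equiv 1$, $\lambda_n^{(j)}\to 0$, or $\lambda_n^{(j)}\to\infty$, and the dichotomy $\xi_n^{(j)}\equiv\xi^{(j)}$ finite or $|\xi_n^{(j)}|\to\infty$. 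The task is to discard the profiles with $\lambda_n^{(j)}\to 0$ or $|\xi_n^{(j)}|\to\infty$, to normalize the surviving ones into one of the two cases in \eqref{N2-20}, and then to verify the three decouplings.

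The refinement rests on the direct computation
\begin{equation*}
\|g_{\theta,\xi_0,x_0,\lambda,t_0}\phi\|_{\dot{H}^1}^2=\lambda^{-2}\|\nabla\phi\|_{L^2}^2+|\xi_0|^2\|\phi\|_{L^2}^2+(\text{cross term}).
\end{equation*}
If $\lambda_n^{(j)}\to 0$ or $|\xi_n^{(j)}|\to\infty$, this norm blows up unless $\phi^{(j)}=0$; after establishing the $\dot{H}^1$ decoupling simultaneously (via the $L^2$ orthogonality \eqref{N-2} and a standard expansion-of-the-square argument), such profiles would force an unbounded $\dot{H}^1$ contribution inside $u_n$, so they must vanish and can be absorbed into the remainder without destroying \eqref{N2-1} (thanks to \eqref{B-1.8}). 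The surviving profiles fall into two cases: $\lambda_n^{(j)}\equiv 1$ with $\xi_n^{(j)}\equiv\xi^{(j)}$ finite, where item~(4) of the preceding remark lets us absorb the constant frequency shift into the profile (replacing $\phi^{(j)}$ by $e^{ix\cdot\xi^{(j)}}\phi^{(j)}$) and set $\xi_n^{(j)}\equiv 0$; and $\lambda_n^{(j)}\to\infty$, which is kept as is. In the first case $\phi^{(j)}\in H^1$ because $(g_n^{(j)})^{-1}u_n$ is uniformly $H^1$-bounded, so the weak $L^2$ limit lies in $H^1$; in the second case the same conclusion is obtained after a refined extraction using the $\dot{H}^1$ decoupling.

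Relabelling the surviving profiles as $\phi_\alpha$ and the group elements as $g_{n\alpha}$ yields \eqref{N2-21} and \eqref{N2-20}, and the orthogonality \eqref{N-2} is inherited from Lemma~\ref{B-Lem1.5}. The $L^2$ decoupling \eqref{N2-2} is immediate from \eqref{B-1.9} since $G'$ acts unitarily on $L^2$. The $\dot{H}^1$ decoupling \eqref{N2-3} follows by expanding the square and showing that the cross terms $\langle g_{n\alpha}\phi_\alpha, g_{n\alpha'}\phi_{\alpha'}\rangle_{\dot{H}^1}$ and $\langle g_{n\alpha}\phi_\alpha, R(n,J)\rangle_{\dot{H}^1}$ vanish in the limit, the former by \eqref{N-2} and the latter by the weak convergence $(g_{n\alpha})^{-1}R(n,J)\rightharpoonup 0$ in $L^2$. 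The main obstacle will be the Hamiltonian decoupling \eqref{N2-4}, since the nonlinear potential $F(u)=\sum_{k\ge 2}\frac{\lambda^{k-1}|u|^{2k}}{k!}$ is not polynomial. I plan to handle it term by term: for each fixed $k$, the decoupling of $\int|u_n|^{2k}\,dx$ follows from a pointwise orthogonality argument extending \eqref{B-1.10} to $L^{2k}$ via H\"older and Gagliardo--Nirenberg. The delicate point is uniformity in $k$, for which I plan to combine the Trudinger inequality (Lemma~\ref{Lem2.03}) with \eqref{2.03} to derive a tail bound of the form $\sum_{k\ge K}\frac{\lambda^{k-1}}{k!}\int|u_n|^{2k}\,dx\to 0$ as $K\to\infty$ uniformly in $n$, exploiting $\|\nabla u_n\|_{L^2}\le 1$ from $H(u_0)\le 1$. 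This interchange of sum and limit is the most delicate step of the argument.
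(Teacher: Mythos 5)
Your route is genuinely different from the paper's. You apply the $L^2$ decomposition of Lemma \ref{B-Lem1.5} directly to $u_n$ and try to kill the bad profiles afterwards using $H^1$-boundedness; the paper instead first writes $u_n=\sum_{|k|\le N}\Box_k u_n+R_N$ with sharp Littlewood--Paley cutoffs, runs the $L^2$ profile decomposition on each frequency-localized piece $\Box_k u_n$, shows that any profile whose parameters are incompatible with the annulus $2^{k-1}<|\xi|\le 2^k$ must vanish (its Fourier support escapes the annulus), and finally merges non-orthogonal profiles coming from different $k$. The frequency localization buys three things simultaneously: the bad regimes are eliminated by inspection; the $\dot H^1$ cross terms vanish because on a fixed annulus the $L^2$ and $\dot H^1$ pairings are comparable; and $\mathcal F\phi_\alpha$ may be taken compactly supported, so $\phi_\alpha\in H^1$ for free. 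Your direct route must supply each of these by hand, and two of them are where your argument has genuine gaps.

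First, the elimination of the $\lambda_n^{(j)}\to 0$ and $|\xi_n^{(j)}|\to\infty$ profiles as you state it is circular: you invoke the $\dot H^1$ decoupling to conclude that such profiles would contribute unbounded $\dot H^1$ norm, but the decoupling (expansion of the square with vanishing cross terms) is only available once each $g_n^{(j)}\phi^{(j)}$ is known to lie in $\dot H^1$ with controlled norm --- exactly what is in question, since $\phi^{(j)}$ is a priori only an $L^2$ weak limit. The non-circular argument goes through the weak limit itself: since $u_n$ is bounded in $H^1$, the $L^2$ mass of $(g_n^{(j)})^{-1}u_n$ outside the set $\{|\eta+\lambda_n^{(j)}\xi_n^{(j)}|\le\lambda_n^{(j)}R\}$ is $O(R^{-2})$ uniformly in $n$, and in each bad regime this set either escapes every compact set or collapses to a point, forcing $\phi^{(j)}=0$; this is the same mechanism the paper implements via $\Box_k$. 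Second, for the surviving profiles with $\lambda_{n\alpha}\to\infty$ your claim that $\phi_\alpha\in H^1$ follows from ``a refined extraction using the $\dot H^1$ decoupling'' does not work: there $\|(g_{n\alpha})^{-1}u_n\|_{\dot H^1}\lesssim\lambda_{n\alpha}(\|\nabla u_n\|_{L^2}+|\xi_{n\alpha}|\|u_n\|_{L^2})\to\infty$, so the rescaled sequence is not $H^1$-bounded and its weak $L^2$ limit need not lie in $H^1$. One must instead approximate $\phi_\alpha$ in $L^2$ by a function with compactly supported Fourier transform and absorb the error into $R(n,J)$, which is why the paper remarks that one may take $\mathcal F\phi_\alpha\in C_c^\infty$ and why \eqref{N2-3} is stated in terms of $\|g_{n\alpha}\phi_\alpha\|_{\dot H^1}$ rather than $\|\phi_\alpha\|_{\dot H^1}$.

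On the Hamiltonian decoupling your plan coincides with the paper's reduction to the $L^{2m}$ statement \eqref{N-15}, and is in fact more careful on one point: the paper passes from the fixed-$m$ decoupling to \eqref{N2-4} without justifying the interchange of $\sum_{m\ge 2}\frac{\lambda^{m-1}}{m!}(\cdot)$ with $\lim_J\limsup_n$, and a uniform tail bound of the kind you propose (via Lemma \ref{Lem2.03} and \eqref{2.03}, using $\|\nabla u_n\|_{L^2}\le 1$ as in the application of the lemma) is the right way to close that. Note also that the paper disposes of the $\lambda_{n\alpha}\to\infty$ profiles in $L^{2m}$ outright via $\|g_{n\alpha}\phi_\alpha\|_{L^{2m}}\lesssim\lambda_{n\alpha}^{-1+1/m}\to 0$, so only the translated $\lambda\equiv 1$, $t_{n\alpha}\equiv 0$ profiles actually require the pointwise orthogonality you describe.
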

\begin{proof}
Let
\begin{equation*}
\Box_k=\mathcal {F}^{-1}\chi_k\mathcal {F},
\ \ \ \chi_k=\begin{cases}
1& \text{$2^{k-1}<|\xi|\leq 2^{k}$};\\
0& \text{$else$.}
\end{cases}
\end{equation*}
Then, we have
\begin{equation*}
u_n=\sum_{k=-\infty}^{+\infty}\Box_ku_n:=\sum_{|k|\leq N}\Box_ku_n+R_N,
\end{equation*}
and
\begin{align*}
\begin{aligned}
&\|u_n\|^2_{L^2(\mathbb{R}^2)}=\sum_{|k|\leq N}\|\Box_ku_n\|^2_{L^2(\mathbb{R}^2)}+\|R_N\|^2_{L^2(\mathbb{R}^2)}, \\
&\|u_n\|^2_{\dot{H}^1(\mathbb{R}^2)}=\sum_{|k|\leq N}\|\Box_ku_n\|^2_{\dot{H}^1(\mathbb{R}^2)}+\|R_N\|^2_{\dot{H}^1(\mathbb{R}^2)}, \\
&\lim_{N\rightarrow\infty}\limsup_{n\rightarrow\infty}\|R_N\|_{L^2(\mathbb{R}^2)}=0.
\end{aligned}
\end{align*}

By Lemma \ref{B-Lem1.5}, after passing to a subsequence if necessary, we can obtain
\begin{equation}\label{B-1.13}
\Box_ku_n=\sum_{j=1}^{l_k}g_{nk}^{(j)}\phi_k^{(j)}+w_{nk}^{(l_k)}
\end{equation}
with the stated properties 1)-4) and \eqref{B-1.7}-\eqref{N-3}.
Denote
\begin{align*}
\begin{aligned}
&\Lambda_{1,0}=\{(k,j)\ |\  |k|\leq N, 1\leq j\leq l_k, \lambda_{nk}^{(j)}\equiv1, \xi_{nk}^{(j)}\equiv 0 \};\\
&\Lambda_{1,\infty}=\{(k,j)\ |\   |k|\leq N, 1\leq j\leq l_k, \lambda_{nk}^{(j)}\equiv1, |\xi_{nk}^{(j)}|\rightarrow\infty \};\\
&\Lambda_{0}=\{(k,j)\ |\   |k|\leq N, 1\leq j\leq l_k, \lambda_{nk}^{(j)}\rightarrow 0 \};\\
&\Lambda_{\infty,0}=\{(k,j)\ |\   |k|\leq N, 1\leq j\leq l_k, \lambda_{nk}^{(j)}\rightarrow\infty, \xi_{nk}^{(j)}\equiv \xi_{k}^{(j)},
|\xi_{k}^{(j)}|<2^{k-1} \};\\
&\Lambda_{\infty,1}=\{(k,j)\ |\   |k|\leq N, 1\leq j\leq l_k, \lambda_{nk}^{(j)}\rightarrow\infty, \xi_{nk}^{(j)}\equiv \xi_{k}^{(j)},
|\xi_{k}^{(j)}|\in [2^{k-1}, 2^k]\};\\
&\Lambda_{\infty,\infty}=\{(k,j)\ |\   |k|\leq N, 1\leq j\leq l_k, \lambda_{nk}^{(j)}\rightarrow\infty, |\xi_{nk}^{(j)}|\rightarrow\infty  \ or\ \xi_{nk}^{(j)}\equiv \xi_{k}^{(j)}, |\xi_{k}^{(j)}|>2^{k}\}.
\end{aligned}
\end{align*}
\noindent\textbf{Step 1.} We prove that
\begin{equation}
\begin{split}\label{N-110}
u_n=&\sum_{(k,j)\in\Lambda_{1,0}\cup\Lambda_{\infty,1}}g_{nk}^{(j)}\phi_k^{(j)}+R
\end{split}
\end{equation}
with
$g_{nk}^{(j)}\phi_k^{(j)}=\Box_kg_{nk}^{(j)}\phi_k^{(j)}$ and for each fixed $N$,
\begin{equation}\label{N-217}
\lim_{n\rightarrow\infty}\{\|u_n\|^2_{L^2(\mathbb{R}^2)}-\sum_{(k,j)\in\Lambda_{1,0}\cup\Lambda_{\infty,1}}
\|\phi_k^{(j)}\|^2_{L^2(\mathbb{R}^2)}-\|R\|^2_{L^2(\mathbb{R}^2)}\}=0,
\end{equation}
\begin{equation}\label{N-218}
\lim_{n\rightarrow\infty}\{\|u_n\|^2_{\dot{H}^1(\mathbb{R}^2)}-\sum_{(k,j)\in\Lambda_{1,0}\cup\Lambda_{\infty,1}}
\|g_{nk}^{(j)}\phi_k^{(j)}\|^2_{\dot{H}^1(\mathbb{R}^2)}-\|R\|^2_{\dot{H}^1(\mathbb{R}^2)}\}=0,
\end{equation}
\begin{equation}\label{N-219}
\lim_{N\rightarrow\infty}\lim_{l_k\rightarrow\infty}\limsup_{n\rightarrow\infty}S(e^{it\triangle}R)=0,
\end{equation}
where
\begin{equation*}
R=R_N+R_w,\ \ R_w=\sum_{|k|\leq N}w_{nk}^{(l_k)}.
\end{equation*}

By \eqref{B-1.8} and $\lim_{N\rightarrow\infty}\limsup_{n\rightarrow\infty}\|R_N\|_{L^2(\mathbb{R}^2)}=0$, \eqref{N-219} holds obviously. For  \eqref{N-110},
we  prove it  by induction. For every $k$, suppose that
\begin{equation}\label{N-220}
\Box_ku_n=g_{nk}^{(1)}\phi_k^{(1)}+w_{nk}^{(1)}.
\end{equation}

Case 1. If $(k,1)\in \Lambda_{1,\infty}\cup\Lambda_{0}\cup\Lambda_{\infty,0}\cup\Lambda_{\infty,\infty}$, we have $\phi_k^{(1)}=0$.

In fact, by \eqref{N-220},
\begin{equation*}
\phi_k^{(1)}=(g_{nk}^{(1)})^{-1}\Box_ku_n-(g_{nk}^{(1)})^{-1}w_{nk}^{(1)}.
\end{equation*}
Thus
\begin{equation}
\begin{split}\label{N-221}
\|\phi_k^{(1)}\|^2_{L^2(\mathbb{R}^2)}&=\langle(g_{nk}^{(1)})^{-1}\Box_ku_n-(g_{nk}^{(1)})^{-1}w_{nk}^{(1)},\phi_k^{(1)}\rangle\\
&=\langle u_n,\Box_kg_{nk}^{(1)}\phi_k^{(1)}\rangle-\langle(g_{nk}^{(1)})^{-1}w_{nk}^{(1)},\phi_k^{(1)}\rangle.
\end{split}
\end{equation}

Using \eqref{N-3},
\begin{equation}
\langle(g_{nk}^{(1)})^{-1}w_{nk}^{(1)},\phi_k^{(1)}\rangle\rightarrow 0 \ as \ n\rightarrow\infty.
\end{equation}
By direct calculation,
\begin{equation}\label{N-1}
\begin{split}
&\Box_kg_{nk}^{(1)}\phi_k^{(1)}\\
=&\mathcal {F}^{-1}\chi_k(\xi)\lambda_{nk}^{(1)}e^{i\theta_{nk}^{(1)}}e^{-ix_{nk}^{(1)}(\xi+\xi_{nk}^{(1)})}
e^{-it_{nk}^{(1)}(\lambda_{nk}^{(1)})^2|\xi+\xi_{nk}^{(1)}|^2}\widehat{\phi_k^{(1)}}(\lambda_{nk}^{(1)}(\xi+\xi_{nk}^{(1)}))\\
=&\frac{1}{\lambda_{nk}^{(1)}}e^{i\theta_{nk}^{(1)}}
e^{ix\cdot\xi_{nk}^{(1)}}(\mathcal {F}^{-1}\chi_k(\frac{\xi}{\lambda_{nk}^{(1)}}+\xi_{nk}^{(1)})
e^{-it_{nk}^{(1)}|\xi|^2}\widehat{\phi_k^{(1)}})(\frac{x-x_{nk}^{(1)}}{\lambda_{nk}^{(1)}}).
\end{split}
\end{equation}
Let $  n\rightarrow\infty$, when $(k,1)\in \Lambda_{1,\infty}$,
\begin{equation}\label{N-5}
\begin{split}
\|\Box_kg_{nk}^{(1)}\phi_k^{(1)}\|^2_{L^2(\mathbb{R}^2)}
\leq \int_{2^{k-1}\leq|\xi+\xi_{nk}^{(1)}|\leq2^{k}}|\mathcal {F}\phi_k^{(1)}|^2d\xi
\rightarrow 0 ;
\end{split}
\end{equation}
when $(k,1)\in \Lambda_{0}$,
\begin{equation}
\begin{split}
\|\Box_kg_{nk}^{(1)}\phi_k^{(1)}\|^2_{L^2(\mathbb{R}^2)}
\leq \int_{\lambda_{nk}^{(1)}2^{k-1}\leq|\xi+\lambda_{nk}^{(1)}\xi_{nk}^{(1)}|\leq\lambda_{nk}^{(1)}2^{k}}|\mathcal {F}\phi_k^{(1)}|^2d\xi
\rightarrow 0;
\end{split}
\end{equation}
when $(k,1)\in\Lambda_{\infty,0}$,
\begin{equation}
\begin{split}
\|\Box_kg_{nk}^{(1)}\phi_k^{(1)}\|^2_{L^2(\mathbb{R}^2)}
\leq\int_{|\xi|\geq\lambda_{nk}^{(1)}(2^{k-1}-|\xi_{k}^{(1)}|)}|\mathcal {F}\phi_k^{(1)}|^2d\xi
\rightarrow 0;
\end{split}
\end{equation}
when $(k,1)\in\Lambda_{\infty,\infty}$,
\begin{equation}\label{N-6}
\begin{split}
\|\Box_kg_{nk}^{(1)}\phi_k^{(1)}\|^2_{L^2(\mathbb{R}^2)}
\leq\int_{|\xi|\geq\lambda_{nk}^{(1)}(|\xi_{nk}^{(1)}|-2^{k})}|\mathcal {F}\phi_k^{(1)}|^2d\xi
\rightarrow 0.
\end{split}
\end{equation}
By \eqref{N-221}-\eqref{N-6}, $\|\phi_k^{(1)}\|^2_{L^2(\mathbb{R}^2)}=0$ and thus $\phi_k^{(1)}=0$.

Case 2. If $(k,1)\in \Lambda_{1,0}\cup\Lambda_{\infty,1}$, we have
\begin{equation}\label{N2-40}
\|g_{nk}^{(1)}\phi_k^{(1)}-\Box_kg_{nk}^{(1)}\phi_k^{(1)}\|_{L^2(\mathbb{R}^2)}\rightarrow0 \ as \ n\rightarrow\infty.
\end{equation}

Let $\chi_{A_k^{(1)}}$ be the characteristic function of the set $A_k^{(1)}$ and $P_{A_k^{(1)}}=\mathcal {F}^{-1}\chi_{A_k^{(1)}}\mathcal {F}$, then
\begin{equation*}
g_{nk}^{(1)}\left(P_{A_k^{(1)}}\phi_k^{(1)}+P_{A_k^{(1)}}(g_{nk}^{(1)})^{-1}w_{nk}^{(1)}\right)=P_{\tilde{A}_k^{(1)}}
(g_{nk}^{(1)}\phi_k^{(1)}+w_{nk}^{(1)})
=P_{\tilde{A}_k^{(1)}}\Box_ku_n,
\end{equation*}
where
\begin{equation*}
P_{\tilde{A}_k^{(1)}}=\mathcal {F}^{-1}\chi_{A_k^{(1)}}(\lambda_{nk}^{(1)}(\xi-\xi_{nk}^{(1)}))\mathcal {F}.
\end{equation*}
Note that
\begin{equation*}
\langle P_{A_k^{(1)}}\phi_k^{(1)}, P_{A_k^{(1)}}(g_{nk}^{(1)})^{-1}w_{nk}^{(1)}\rangle
= \langle P_{A_k^{(1)}}\phi_k^{(1)}, (g_{nk}^{(1)})^{-1}w_{nk}^{(1)}\rangle\rightarrow0 \ as \ n\rightarrow\infty,
\end{equation*}
we have
\begin{equation}\label{N-223}
\lim_{n\rightarrow\infty}\left(\|P_{\tilde{A}_k^{(1)}}\Box_ku_n\|^2_{L^2(\mathbb{R}^2)}-\|P_{A_k^{(1)}}\phi_k^{(1)}\|^2_{L^2(\mathbb{R}^2)}
-\|P_{A_k^{(1)}}(g_{nk}^{(1)})^{-1}w_{nk}^{(1)}\|^2_{L^2(\mathbb{R}^2)}\right)=0.
\end{equation}

When $(k,1)\in \Lambda_{1,0}$, we have $P_{A_k^{(1)}}=P_{\tilde{A}_k^{(1)}}$. Choosing $A_k^{(1)}=\{\xi \ | \ |\xi| \leq2^{k-1}\ or\ |\xi| >2^{k}\}$, then
by \eqref{N-223}, $P_{A_k^{(1)}}\phi_k^{(1)}=0$, the desired result follows.

When $(k,1)\in \Lambda_{\infty,1}$ and $|\xi_{k}^{(1)}|\in (2^{k-1}, 2^k)$, we have
\begin{equation*}
\begin{split}
&\|g_{nk}^{(1)}\phi_k^{(1)}-\Box_kg_{nk}^{(1)}\phi_k^{(1)}\|^2_{L^2(\mathbb{R}^2)}\\
\leq&\int_{|\xi+\lambda_{nk}^{(1)}\xi_{k}^{(1)}|\leq\lambda_{nk}^{(1)}2^{k-1}}|\mathcal {F}\phi_k^{(1)}|^2d\xi
+\int_{|\xi+\lambda_{nk}^{(1)}\xi_{k}^{(1)}|\geq\lambda_{nk}^{(1)}2^{k}}|\mathcal {F}\phi_k^{(1)}|^2d\xi\\
\leq&\int_{|\xi|\geq\lambda_{nk}^{(1)}(|\xi_{k}^{(1)}|-2^{k-1})}|\mathcal {F}\phi_k^{(1)}|^2d\xi
+\int_{|\xi|\geq\lambda_{nk}^{(1)}(2^{k}-|\xi_{k}^{(1)}|)}|\mathcal {F}\phi_k^{(1)}|^2d\xi\\
\rightarrow & 0 \ as \ n\rightarrow\infty
\end{split}
\end{equation*}

When $(k,1)\in\Lambda_{\infty,1}$ and $|\xi_{k}^{(1)}|= 2^{k}$, we denote $\xi=(\xi_1,\xi_2)$ and $\xi_{k}^{(1)}=(\xi_{k1}^{(1)},\xi_{k2}^{(1)})$.
The line $\xi_2=-\frac{\xi_{k1}^{(1)}}{\xi_{k2}^{(1)}}\xi_1$ (when $\xi_{k2}^{(j)}=0$, we use the line $\xi_1=0$ instead) separates the frequency
space $L^2(\mathbb{R}^2)$ into two half-planes. We let $A_k^{(1)}$ to be the half-plane which  contains the point $\xi_{k}^{(1)}$,
then
\begin{equation*}
P_{\tilde{A}_k^{(1)}}\Box_ku_n
=\mathcal {F}^{-1}\chi_{A_k^{(1)}}(\lambda_{nk}^{(1)}(\xi-\xi_{k}^{(1)}))\chi_k\mathcal {F}u_n=0.
\end{equation*}
By \eqref{N-223}, we have $P_{A_k^{(1)}}\phi_k^{(1)}=0$.
Note that
\begin{equation*}
\begin{split}
&\lim_{n\rightarrow\infty}\|g_{nk}^{(1)}(1-P_{A_k^{(1)}})\phi_k^{(1)}-\Box_kg_{nk}^{(1)}\phi_k^{(j)}\|^2_{L^2(\mathbb{R}^2)}\\
=&\lim_{n\rightarrow\infty}\|[(1-\chi_{A_k^{(1)}})(\cdot)-\chi_k(\frac{\cdot}{\lambda_{nk}^{(1)}}+\xi_{k}^{(1)})
]\mathcal {F}\phi_k^{(1)}\|^2_{L^2(\mathbb{R}^2)}\\
\leq&\lim_{n\rightarrow\infty}
\int_{\xi\in \mathbb{R}^2\setminus (A_k^{(1)}\cup\{2^{k-1}\leq|\frac{\xi}{\lambda_{nk}^{(1)}}+\xi_{k}^{(1)}|\leq2^k\})}|\mathcal {F}\phi_k^{(1)}|^2d\xi\\
=&0,
\end{split}
\end{equation*}
\eqref{N2-40} holds.

When $(k,1)\in\Lambda_{\infty,1}$ and $|\xi_{k}^{(1)}|= 2^{k-1}$, let $A_k^{(1)}$ to be the half-plane which does NOT contain the point $\xi_{k}^{(1)}$, we can prove \eqref{N2-40} similarly as above.

By the proof above and absorbing the error into $w_{nk}^{(1)}$, we can suppose
$g_{nk}^{(1)}\phi_k^{(1)}=\Box_kg_{nk}^{(1)}\phi_k^{(1)}$ and  $(k,1)\in \Lambda_{1,0}\cup\Lambda_{\infty,1}$.
Denote $u_n^{(1)}=u_n-g_{nk}^{(1)}\phi_k^{(1)}$ and suppose
\begin{equation}\label{N-220}
\Box_ku_n^{(1)}=g_{nk}^{(2)}\phi_k^{(2)}+w_{nk}^{(2)}.
\end{equation}
Repeating the proof above, we can obtain  $g_{nk}^{(2)}\phi_k^{(2)}=\Box_kg_{nk}^{(2)}\phi_k^{(2)}$ and  $(k,2)\in \Lambda_{1,0}\cup\Lambda_{\infty,1}$. $\cdots\cdots$, by induction, we obtain \eqref{N-110}.

 By the orthogonal condition \eqref{N-2},
following the proof in \cite{B-M-V}, we can obtain that for fix $k$ and $\forall j\neq j'$,
\begin{align*}
\begin{aligned}
&\lim_{n\rightarrow\infty}\langle g_{nk}^{(j)}\phi_k^{(j)}, g_{nk}^{(j')}\phi_k^{(j')}\rangle_{\dot{H}^1(\mathbb{R}^2)}=\lim_{n\rightarrow\infty}\langle \Box_kg_{nk}^{(j)}\phi_k^{(j)}, \Box_kg_{nk}^{(j')}\phi_k^{(j')}\rangle_{\dot{H}^1(\mathbb{R}^2)}=0,\\
&\lim_{n\rightarrow\infty}\langle g_{nk}^{(j)}\phi_k^{(j)}, w_{nk}^{(l_k)}\rangle_{\dot{H}^1(\mathbb{R}^2)}=\lim_{n\rightarrow\infty}\langle \Box_kg_{nk}^{(j)}\phi_k^{(j)}, \Box_kw_{nk}^{(l_k)}\rangle_{\dot{H}^1(\mathbb{R}^2)}=0,
\end{aligned}
\end{align*}
 \eqref{N-217} and \eqref{N-218} were proved.

\noindent\textbf{Step 2.}  For arbitrary $(k_1,j_1), (k_2,j_2)\in \Lambda_{1,0}\cup\Lambda_{\infty,1}$, we define
$(k_1,j_1)\sim(k_2,j_2)$ if the orthogonal condition \eqref{N-2} is NOT true for any subsequence, that is
\begin{equation*}
\begin{split}
\frac{\lambda_{nk_1}^{(j_1)}}{\lambda_{nk_2}^{(j_2)}}+\frac{\lambda_{nk_2}^{(j_2)}}{\lambda_{nk_1}^{(j_1)}}
&+\lambda_{nk_2}^{(j_2)}\lambda_{nk_1}^{(j_1)}|\xi_{nk_2}^{(j_2)}-\xi_{nk_1}^{(j_1)}|^2
+\frac{|x_{nk_2}^{(j_2)}-x_{nk_1}^{(j_1)}|^2}{\lambda_{nk_2}^{(j_2)}\lambda_{nk_1}^{(j_1)}}\\
&+\frac{|t_{nk_2}^{(j_2)}(\lambda_{nk_2}^{(j_2)})^2-t_{nk_1}^{(j_1)}(\lambda_{nk_1}^{(j_1)})^2|}{\lambda_{nk_2}^{(j_2)}\lambda_{nk_1}^{(j_1)}}
<\infty\ for\  \forall n.
\end{split}
\end{equation*}
By the definition above, if $(k_1,j_1)\sim(k_2,j_2)$, we have
\begin{equation*}
\lambda_{nk_1}^{(j_1)}\sim \lambda_{nk_2}^{(j_2)},\ \lambda_{nk_1}^{(j_1)}\xi_{nk_1}^{(j_1)} \sim\lambda_{nk_2}^{(j_2)}\xi_{nk_2}^{(j_2)}, \
\frac{x_{nk_1}^{(j_1)}}{\lambda_{nk_1}^{(j_1)}}\sim\frac{x_{nk_2}^{(j_2)}}{\lambda_{nk_2}^{(j_2)}},\
t_{nk_1}^{(j_1)}\sim t_{nk_2}^{(j_2)}.
\end{equation*}
Note that
\begin{equation*}
g_{\theta,\xi_0,x_0,\lambda,t_0}f(x):=\frac{1}{\lambda}e^{i\theta}e^{i\frac{x}{\lambda}
\cdot\lambda\xi_0}(e^{it_0\triangle}f)(\frac{x}{\lambda}-\frac{x_0}{\lambda}),
\end{equation*}
by Remark 4.2, we can put these two profiles together as one profile. Then, we can denote
$(\Lambda_{1,0}\cup\Lambda_{\infty,1})/\sim=\{1,2,\cdots, J\}$,  \eqref{N2-21}-\eqref{N2-3} were proved.

Specially, as $C_c^\infty$ is dense in $L^2$,
we can also suppose $\mathcal {F}\phi_\alpha\in C_c^\infty$ and hence $\phi_\alpha\in H^1(\mathbb{R}^2)$.

 \noindent\textbf{Step 3.} We prove \eqref{N2-4} now.
By  \eqref{N2-3}, we only need to prove that for $\forall m\in\mathbb{N}$, $m\geq2$
\begin{equation}\label{N-15}
\begin{split}
\lim_{J\rightarrow\infty}\limsup_{n\rightarrow\infty}
\{\|u_n\|^{2m}_{L^{2m}(\mathbb{R}^2)}&-\sum_{\alpha=1}^J
\|g_{n\alpha}\phi_\alpha\|^{2m}_{L^{2m}(\mathbb{R}^2)}-
\|R(n,J)\|^{2m}_{L^{2m}(\mathbb{R}^2)}\}=0.
\end{split}
\end{equation}
As
\begin{equation*}
\|R(n,J)\|_{L^{2m}(\mathbb{R}^2)}\lesssim \|R(n,J)\|^{1/(m-1)}_{L^{4}(\mathbb{R}^2)}\|R(n,J)\|^{(m-2)/(m-1)}_{H^{1}(\mathbb{R}^2)}
\end{equation*}
and for $\frac{1}{4}<\theta<\frac{1}{2}$,
\begin{equation*}
\begin{split}
\|R(n,J)\|_{L^{4}(\mathbb{R}^2)}&\leq \|e^{it\triangle}R(n,J)\|_{L_t^\infty L_x^{4}(\mathbb{R}\times\mathbb{R}^2)}
\lesssim \|\langle\partial_t\rangle^\theta e^{it\triangle}R(n,J)\|_{L_{t,x}^{4}(\mathbb{R}\times\mathbb{R}^2)}\\
&=\|e^{it\triangle}R(n,J)\|^{1-2\theta}_{L_{t,x}^4(\mathbb{R}\times\mathbb{R}^2)}\|\langle\partial_t\rangle^{1/2} e^{it\triangle}R(n,J)\|^{2\theta}_{ L_{t,x}^{4}(\mathbb{R}\times\mathbb{R}^2)}\\
&=\|e^{it\triangle}R(n,J)\|^{1-2\theta}_{L_{t,x}^4(\mathbb{R}\times\mathbb{R}^2)}\|e^{it\triangle}R(n,J)\|^{2\theta}_{ L_{t}^{4}H_4^1(\mathbb{R}\times\mathbb{R}^2)}\\
&\lesssim\|e^{it\triangle}R(n,J)\|^{1-2\theta}_{L_{t,x}^4(\mathbb{R}\times\mathbb{R}^2)},
\end{split}
\end{equation*}
by \eqref{N2-1}, we have
\begin{equation*}
\lim_{J\rightarrow\infty}\limsup_{n\rightarrow\infty}\|R(n,J)\|^{2m}_{L^{2m}(\mathbb{R}^2)}=0
\end{equation*}

We separate the set $1\leq\alpha\leq J$ into two subsets:
\begin{equation}\label{N2-7}
\Lambda_{1}=\{1\leq\alpha\leq J\ |\ \lambda_{n\alpha}\equiv1, \xi _{n\alpha}\equiv0\},\ \
\Lambda_{\infty}=\{1\leq\alpha\leq J\ |\ \lambda_{n\alpha}\rightarrow\infty\}.
\end{equation}

When $\alpha\in\Lambda_{\infty}$,
\begin{equation}
\begin{split}
\lim_{n\rightarrow\infty}\|g_{n\alpha}\phi_\alpha\|_{L^{2m}(\mathbb{R}^2)}
&=\lim_{n\rightarrow\infty}(\lambda_{n\alpha})^{-1+1/m}\|e^{it_{n\alpha}\triangle}\phi_\alpha\|_{L^{2m}(\mathbb{R}^2)}\\
&\leq\lim_{n\rightarrow\infty}(\lambda_{n\alpha})^{-1+1/m}\|\phi_\alpha\|_{H^{1}(\mathbb{R}^2)}=0.
\end{split}
\end{equation}

Hence, in order  to prove \eqref{N-15}, one only need to prove
\begin{equation}\label{N-116}
\lim_{n\rightarrow\infty}\{\|\sum_{\alpha\in\Lambda_{1}}g_{n\alpha}\phi_\alpha\|^{2m}_{L^{2m}(\mathbb{R}^2)}
-\sum_{\alpha\in\Lambda_{1}}
\|g_{n\alpha}\phi_\alpha\|^{2m}_{L^{2m}(\mathbb{R}^2)}\}=0.
\end{equation}

If $\alpha\in\Lambda_{1}$ and $t_{n\alpha}\rightarrow\infty$,
for a function $\tilde{\phi}_\alpha\in\dot{H}^{1/2}\cap L^{4/3}$ we have
\begin{equation}
\begin{split}
\|g_{n\alpha}\phi_\alpha\|_{L^{4}(\mathbb{R}^2)}&\leq \|g_{n\alpha}\phi_\alpha-g_{n\alpha}\tilde{\phi}_\alpha\|_{L^{4}(\mathbb{R}^2)}+\|g_{n\alpha}\tilde{\phi}_\alpha\|_{L^{4}(\mathbb{R}^2)}\\
&\leq \|\phi_\alpha-\tilde{\phi}_\alpha\|_{\dot{H}^{1/2}(\mathbb{R}^2)}+|t_{n\alpha}|^{-1/2}\|\tilde{\phi}_\alpha\|_{L^{4/3}(\mathbb{R}^2)}
\end{split}
\end{equation}
By approximating $\phi_\alpha$ by $\tilde{\phi}_\alpha\in C_c^\infty$ in $\dot{H}^{1/2}$ and sending $n\rightarrow\infty$ we have
$\|g_{n\alpha}\phi_\alpha\|_{L^{4}(\mathbb{R}^2)}\rightarrow0$. Note that $g_{n\alpha}\phi_\alpha\in H^1$, we obtain $\|g_{n\alpha}\phi_\alpha\|_{L^{2m}(\mathbb{R}^2)}\rightarrow0$ for $\forall m\geq2$.

If $\alpha\in\Lambda_{1}$ and $t_{n\alpha}\equiv 0$,
we have orthogonal condition $|x_{n\alpha}-x_{n\alpha'}|\rightarrow\infty$ for any $\alpha\neq\alpha'$. Thus
\begin{equation*}
\begin{split}
&\lim_{n\rightarrow\infty}\{\|\sum_{\alpha\in\Lambda_{1},t_{n\alpha}\equiv 0}g_{n\alpha}\phi_\alpha\|^{2m}_{L^{2m}(\mathbb{R}^2)}
-\sum_{\alpha\in\Lambda_{1},t_{n\alpha}\equiv 0}
\|g_{n\alpha}\phi_\alpha\|^{2m}_{L^{2m}(\mathbb{R}^2)}\}\\
=&\lim_{n\rightarrow\infty}\{\|\sum_{\alpha\in\Lambda_{1},t_{n\alpha}\equiv 0}\phi_\alpha(\cdot-x_{n\alpha})\|^{2m}_{L^{2m}(\mathbb{R}^2)}-\sum_{\alpha\in\Lambda_{1},t_{n\alpha}\equiv 0}
\|\phi_\alpha(\cdot-x_{n\alpha})\|^{2m}_{L^{2m}(\mathbb{R}^2)}\}=0.
\end{split}
\end{equation*}
\eqref{N-116} holds and then \eqref{N2-4} was proved.

\end{proof}

\section{The Proof  of  Theorem \ref{th1.2}}

Let $u$ be a solution of \eqref{1.1b}, by Strichartz estimate and \eqref{2.038},
\begin{equation}
\begin{split}
&\|u\|_{L^4_{t,x}\cap L_t^{2/(1-2\varepsilon)}L_x^{1/\varepsilon}(\mathbb{R}\times\mathbb{R}^2)}\\
\leq &C\|u_0\|_{L^2(\mathbb{R}^2)}
+C\|u\|^3_{L^4_{t,x}(\mathbb{R}\times\mathbb{R}^2)}
+C(E)\|u\|_{L_t^{2/(1-2\varepsilon)}L_x^{1/\varepsilon}(\mathbb{R}\times\mathbb{R}^2)}\|u\|^{8\varepsilon}_{L^4_{t,x}(\mathbb{R}\times\mathbb{R}^2)}\\
\leq &C\|u_0\|_{L^2(\mathbb{R}^2)}+C\|u\|^3_{L^4_{t,x}\cap L_t^{2/(1-2\varepsilon)}L_x^{1/\varepsilon}(\mathbb{R}\times\mathbb{R}^2)}+C(E)\|u\|^{1+8\varepsilon}_{L^4_{t,x}\cap L_t^{2/(1-2\varepsilon)}L_x^{1/\varepsilon}(\mathbb{R}\times\mathbb{R}^2)}.
\end{split}
\end{equation}
When $\|u_0\|_{L^2(\mathbb{R}^2)}\ll 1$, by standard continuity argument, we have
\begin{equation}\label{18.2}
\|u\|_{L_{t,x}^4(\mathbb{R}\times\mathbb{R}^2)}\leq C\|u_0\|_{L^2(\mathbb{R}^2)}<\infty.
\end{equation}
Hence, if $M(u)\ll 1$, then $\|u\|_{L_{t,x}^4(\mathbb{R}\times\mathbb{R}^2)}<\infty$. In particular,
we have global existence and scattering in both directions.

For any mass $m\geq0$, we define
\begin{equation*}
A(m):= \sup \{S(u):u\ is\ the\ global\ sulution\ of\  \eqref{1.1b}, M(u)\leq m, H(u)\leq1\}.
\end{equation*}
Then $A:[0,+\infty)\rightarrow [0,+\infty]$ is a monotone increasing
function of $m$. As $A$  is left-continuous  and finite for small $m$, there must exist a unique critical mass $m_0\in(0 +\infty]$
such that $A(m)$ is finite for all $m<m_0$ but infinite for all $m\geq m_0$.



To prove Theorem \ref{th1.2}, one only needs to prove that the critical mass $m_0$ is infinite. We will  prove that by contradiction.
\begin{Proposition}\label{Prop1}
Suppose that the critical mass $m_0$ is finite. Let $u_n:\mathbb{R}\times\mathbb{R}^2\rightarrow \mathbb{C}$ for $n=1,2,\cdots$ be a sequence of solutions and $t_n\in \mathbb{R}$ be a sequence of times such that $\limsup_{n\rightarrow \infty}M(u_n)=m_0$ and
\begin{equation}\label{B-1.3}
\lim_{n\rightarrow\infty}S_{\geq t_n}(u_n)=\lim_{n\rightarrow\infty}S_{\leq t_n}(u_n)=+\infty.
\end{equation}
Then there exists a sequence of $x_n=x_n(t_n)\in \mathbb{R}^2$such that $u_n(t_n, x+x_n)$ has a subsequence which converges strongly in $L^2_x(\mathbb{R}^2)$.
\end{Proposition}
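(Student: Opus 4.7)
The plan is to apply the $H^1$ profile decomposition to the sequence $u_n(t_n,\cdot)$ and then use the minimality of $m_0$ together with the stability lemma to force a single, non-escaping profile that captures the whole mass. After replacing $u_n(t)$ with $u_n(t+t_n)$ I may assume $t_n=0$; the hypotheses become $M(u_n)\to m_0$, $H(u_n)\le 1$, and $S_{\ge 0}(u_n),\, S_{\le 0}(u_n)\to\infty$. Since $H(u_n)\le 1$ and $\int F(u_n)\,dx\ge 0$, the sequence $\{u_n(0)\}$ is bounded in $H^1(\mathbb{R}^2)$, and Lemma \ref{N2-Lem1} yields, along a subsequence, profiles $\phi_\alpha\in H^1(\mathbb{R}^2)$ and group elements $g_{n\alpha}\in G'$ with
\begin{equation*}
u_n(0)=\sum_{\alpha=1}^{J}g_{n\alpha}\phi_\alpha+R(n,J),
\end{equation*}
enjoying the parameter dichotomy \eqref{N2-20}, the decouplings \eqref{N2-2}--\eqref{N2-4}, and the remainder control \eqref{N2-1}.

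Next I would associate to each $\phi_\alpha$ a nonlinear profile $v_n^{(\alpha)}$ tracking $g_{n\alpha}\phi_\alpha$ under \eqref{1.1b}: when $\lambda_{n\alpha}\equiv 1$, $\xi_{n\alpha}\equiv 0$ and $t_{n\alpha}\equiv 0$ take the solution with datum $\phi_\alpha$; when $t_{n\alpha}\to\pm\infty$ use the scattering wave operator; and when $\lambda_{n\alpha}\to\infty$ the scaled data have $\|g_{n\alpha}\phi_\alpha\|_{L^p}\to 0$ for every $p>2$, so the exponential nonlinearity reduces asymptotically to the cubic one and $\|v_n^{(\alpha)}\|_{L^4_{t,x}}$ is again finite. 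Assume for contradiction that the conclusion fails: then either two profiles carry positive mass, or a single profile has one of the escape parameters active. In the two-profile case each $M(\phi_\alpha)<m_0$ by \eqref{N2-2}, so by the definition of $m_0$ every $\|v_n^{(\alpha)}\|_{L^4_{t,x}}\le A(M(\phi_\alpha))<\infty$; then
\begin{equation*}
u_n^{\mathrm{app}}(t):=\sum_{\alpha=1}^J v_n^{(\alpha)}(t)+e^{it\triangle}R(n,J)
\end{equation*}
is an approximate solution of \eqref{1.1b} because the cross terms in the nonlinearity vanish in the dual Strichartz spaces by \eqref{B-1.10}--\eqref{B-1.11} and the remainder piece is small by \eqref{N2-1}, and Lemma \ref{B-Lem1.4} lifts the bound to $u_n$, forcing $S(u_n)<\infty$ and contradicting $S_{\ge 0}(u_n)+S_{\le 0}(u_n)\to\infty$.

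Thus only one profile $\phi_1$ survives, with $M(\phi_1)=m_0$, and \eqref{N2-2} gives $\|R(n,1)\|_{L^2}\to 0$. The escape alternatives on $\phi_1$ are ruled out similarly: $\lambda_{n1}\to\infty$ makes $\|v_n^{(1)}\|_{L^4_{t,x}}$ finite and the same stability argument yields $S(u_n)<\infty$, a contradiction; if $t_{n1}\to+\infty$, the linear evolution $e^{it\triangle}g_{n1}\phi_1$ has vanishing $L^4_{t,x}([0,\infty)\times\mathbb{R}^2)$-norm by dispersive decay, so Lemma \ref{B-Lem1.4} on $[0,\infty)$ bounds $S_{\ge 0}(u_n)$, contradicting $S_{\ge 0}(u_n)\to\infty$ (the case $t_{n1}\to-\infty$ is symmetric via $S_{\le 0}$). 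Consequently $\lambda_{n1}\equiv 1$, $\xi_{n1}\equiv 0$, $t_{n1}\equiv 0$, so $g_{n1}\phi_1=e^{i\theta_n}\phi_1(\cdot-x_n)$ with $x_n:=x_{n1}$; combined with $\|R(n,1)\|_{L^2}\to 0$ and extracting a further subsequence on which $\theta_n\to\theta_*$, I absorb $e^{i\theta_*}$ into $\phi_1$ to conclude strong $L^2$ convergence of $u_n(t_n,\cdot+x_n)$ to $\phi_1$.

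The main obstacle will be the approximate-solution estimate, namely that
\begin{equation*}
f\Bigl(\sum_\alpha v_n^{(\alpha)}+e^{it\triangle}R(n,J)\Bigr)-\sum_\alpha f\bigl(v_n^{(\alpha)}\bigr)
\end{equation*}
is small in $L^{4/3}_{t,x}\cap L^{2/(1+2\varepsilon)}_t L^{1/(1-\varepsilon)}_x$. Expanding $e^{\lambda|u|^2}-1=\sum_{k\ge 1}\tfrac{\lambda^k}{k!}|u|^{2k}$ reduces the task to termwise multilinear estimates handled by the $L^4_{t,x}$-orthogonality \eqref{B-1.10}--\eqref{B-1.11}, but resumming the series requires the Trudinger-type bound \eqref{2.03} uniformly in $n$, $J$ and $k$, and this is precisely where the sharp subcritical assumption $\lambda<4(1-4\varepsilon)\pi$ must be invoked.
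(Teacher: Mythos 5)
Your proposal follows essentially the same route as the paper's proof: translate to $t_n=0$, apply the $H^1$ linear profile decomposition of Lemma \ref{N2-Lem1}, build nonlinear profiles (with the cubic equation as the limiting dynamics when $\lambda_{n\alpha}\to\infty$), assemble them into an approximate solution whose error is controlled via the orthogonality \eqref{B-1.10}--\eqref{B-1.11} and the remainder bound \eqref{N2-1}, and use the stability Lemma \ref{B-Lem1.4} together with the minimality of $m_0$ to force a single, non-escaping profile of full mass. The only point you leave implicit that the paper makes explicit is that finiteness of $\|v_n^{(\alpha)}\|_{L^4_{t,x}}$ for the $\lambda_{n\alpha}\to\infty$ profiles is not a consequence of the asymptotic reduction to the cubic equation alone but requires the global scattering theory for the mass-critical cubic NLS in two dimensions (\cite{B-K-T-V}, \cite{Dodson}) as an external input.
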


\begin{proof}


We can take $t_n=0$ for all $n$ by translating $u_n$ in time. Thus,
\begin{equation}\label{B-1.12}
\lim_{n\rightarrow\infty}S_{\geq 0}(u_n)=\lim_{n\rightarrow\infty}S_{\leq 0}(u_n)=+\infty.
\end{equation}
By Lemma \ref{N2-Lem1}, up to a subsequence if necessary, we have
\begin{equation*}
u_n(0)=\sum_{\alpha\in\Lambda_{1}\cup\Lambda_{\infty}}g_{n\alpha}\phi_\alpha+R(n,J),
\end{equation*}
where $\Lambda_{1} $ and $\Lambda_{\infty}$ were defined by \eqref{N2-7}.
Suppose that
\begin{equation*}
g_{n\alpha}=h_{n\alpha}e^{it_{n\alpha}\triangle}
\end{equation*}
where $t_{n\alpha}\in\mathbb{R}$ and $h_{n\alpha}\in G$.
By \eqref{N2-2},
\begin{equation}\label{B-1.14}
\sum_{\alpha\in\Lambda_{1}\cup\Lambda_{\infty}}
M(\phi_\alpha)\leq \limsup_{n\rightarrow\infty}M(u_n(0))\leq m_0
\end{equation}
Hence,
\begin{equation}\label{N2-50}
\sup_{\alpha\in\Lambda_{1}\cup\Lambda_{\infty}}M(\phi_\alpha)\leq m_0.
\end{equation}

We define the nonlinear profile $v_\alpha:\mathbb{R}\times\mathbb{R}^2\rightarrow \mathbb{C}$ as follows:

$\blacklozenge$ When $\alpha\in\Lambda_{1}$

$\diamond$ if $t_{n\alpha}\equiv0$, we define $v_\alpha$ to be the global solution of \eqref{1.1b} with initial data $v_\alpha(0)=\phi_\alpha$.

$\diamond$ if $t_{n\alpha}\rightarrow+\infty$, we define $v_\alpha$ to be the global  solution of \eqref{1.1b} which scatters to $e^{it\triangle}\phi_\alpha$ when $t\rightarrow+\infty$.

$\diamond$ if $t_{n\alpha}\rightarrow-\infty$, we define $v_\alpha$ to be the  global  solution of \eqref{1.1b} which scatters to $e^{it\triangle}\phi_\alpha$ when $t\rightarrow-\infty$.

$\blacklozenge$ When $\alpha\in\Lambda_{\infty}$,

$\diamond$ if $t_{n\alpha}\equiv0$ , we define $v_\alpha$ to be the  global  solution of $iu_t+\triangle u=|u|^2u$ with initial data $v_\alpha(0)=\phi_\alpha$.

$\diamond$ if $t_{n\alpha}\rightarrow+\infty$, we define $v_\alpha$ to be the  global  solution of $iu_t+\triangle u=|u|^2u$ which scatters to $e^{it\triangle}\phi_\alpha$ when $t\rightarrow+\infty$.

$\diamond$ if $t_{n\alpha}\rightarrow-\infty$, we define $v_\alpha$ to be the  global  solution of $iu_t+\triangle u=|u|^2u$ which scatters to $e^{it\triangle}\phi_\alpha$ when $t\rightarrow-\infty$.

If we define
\begin{equation}\label{B-1.19}
\tilde{u}_n=\sum_{\alpha\in\Lambda_{1}\cup\Lambda_{\infty}}T_{h_{n\alpha}}[v_\alpha(\cdot+t_{n\alpha})](t)+e^{it\triangle}R(n,J)
\end{equation}
for $n,J=1,2,\cdots$,
then we have the following two lemmas:

\begin{Lemma} \label{N2-Lem2}(\cite{B-T-M-Z}, Lemma 5.1)
\begin{equation*}
\lim_{n\rightarrow\infty}M(\tilde{u}_n(0)-u_n(0))=0.
\end{equation*}
\end{Lemma}
\begin{Lemma} \label{N2-Lem3}
If
\begin{equation*}
\lim_{J\rightarrow\infty}\limsup_{n\rightarrow\infty}\|\tilde{u}_n\|_{L_{t,x}^{4}(\mathbb{R}\times\mathbb{R}^2)}<\infty,\ \ \|v_\alpha\|_{L_{t,x}^{4}(\mathbb{R}\times\mathbb{R}^2)}<\infty\  ( \forall\alpha),
\end{equation*}
 then
\begin{equation*}
\lim_{J\rightarrow\infty}\limsup_{n\rightarrow\infty}\|\int_0^te^{i(t-\tau)\triangle}
((i\partial_t+\triangle)\tilde{u}_n-f(\tilde{u}_n))(\tau)d\tau\|_X=0.
\end{equation*}
where $X=L_{t,x}^4\cap L_t^{2/(1-2\varepsilon)}L_x^{1/\varepsilon}(\mathbb{R}\times\mathbb{R}^2)$.
\end{Lemma}
\begin{proof}
Denote
\begin{equation*}
v_{n\alpha}=T_{h_{n\alpha}}[v_\alpha(\cdot+t_{n\alpha})].
\end{equation*}
By the definition of $\tilde{u}_n$, we have
\begin{equation*}
\tilde{u}_n=\sum_{\alpha\in\Lambda_{1}\cup\Lambda_{\infty}}v_{n\alpha}+e^{it\triangle}R(n,J)
\end{equation*}
and
\begin{equation*}
(i\partial_t+\triangle)\tilde{u}_n=\sum_{\alpha\in\Lambda_{1}}f(v_{n\alpha}) +\sum_{\alpha\in\Lambda_{\infty}}|v_{n\alpha}|^2v_{n\alpha}.
\end{equation*}
Thus, by triangle inequality, it suffices to show that
\begin{equation}\label{B-1.21}
\lim_{J\rightarrow\infty}\limsup_{n\rightarrow\infty}\|\int_0^te^{i(t-\tau)\triangle}
(f(\tilde{u}_n-e^{it\triangle}R(n,J))-f(\tilde{u}_n))(\tau)d\tau\|_X=0,
\end{equation}
\begin{equation}\label{B-1.22}
\lim_{n\rightarrow\infty}\|\int_0^te^{i(t-\tau)\triangle}
\left(f(\sum_{\alpha\in\Lambda_{1}\cup\Lambda_{\infty}}v_{n\alpha})-\sum_{\alpha\in\Lambda_{1}\cup\Lambda_{\infty}}f(v_{n\alpha})
\right)(\tau)d\tau\|_X=0
\end{equation}
and
\begin{equation}\label{N-13}
\lim_{n\rightarrow\infty}\|\int_0^te^{i(t-\tau)\triangle}
\sum_{\alpha\in\Lambda_{\infty}}\left(f(v_{n\alpha})-|v_{n\alpha}|^2v_{n\alpha}\right)(\tau)d\tau\|_X=0
\end{equation}

Using \eqref{N2-4} and the same estimates as in \eqref{B-1.6}, we have
\begin{equation*}
\begin{split}
&\|\int_0^te^{i(t-\tau)\triangle}
(f(\tilde{u}_n-e^{it\triangle}R(n,J))-f(\tilde{u}_n))(\tau)d\tau\|_X\\
\lesssim&(\|\tilde{u}_n\|^2_{L_{t,x}^4(\mathbb{R}\times\mathbb{R}^2)}+\|e^{it\triangle}R(n,J)\|^2_{L_{t,x}^4(\mathbb{R}\times\mathbb{R}^2)})
\|e^{it\triangle}R(n,J)\|_{L_{t,x}^4(\mathbb{R}\times\mathbb{R}^2)}\\
+&(\|\tilde{u}_n\|^{8\varepsilon}_{L_{t,x}^4(\mathbb{R}\times\mathbb{R}^2)}+
\|e^{it\triangle}R(n,J)\|^{8\varepsilon}_{L_{t,x}^4(\mathbb{R}\times\mathbb{R}^2)})
\|e^{it\triangle}R(n,J)\|_{L_t^{2/(1-2\varepsilon)}L_{x}^{1/\varepsilon}(\mathbb{R}\times\mathbb{R}^2)}.
\end{split}
\end{equation*}
By \eqref{N2-1},
\begin{equation*}
\lim_{J\rightarrow\infty}\limsup_{n\rightarrow\infty}\|e^{it\triangle}R(n,J)\|_{L_{t,x}^4(\mathbb{R}\times\mathbb{R}^2)}=0.
\end{equation*}
As
\begin{equation*}
\begin{split}
&\|e^{it\triangle}R(n,J)\|_{L_t^{2/(1-2\varepsilon)}L_{x}^{1/\varepsilon}(\mathbb{R}\times\mathbb{R}^2)}\\
\lesssim&\|e^{it\triangle}R(n,J)\|^{2\varepsilon/(1-2\varepsilon)}_{L_{t,x}^4(\mathbb{R}\times\mathbb{R}^2)}
\|e^{it\triangle}R(n,J)\|^{(1-4\varepsilon)/(1-2\varepsilon)}_{L_t^{2/(1-\varepsilon)}L_x^{2/\varepsilon}(\mathbb{R}\times\mathbb{R}^2)},
\end{split}
\end{equation*}
and
\begin{equation*}
\|e^{it\triangle}R(n,J)\|_{L_t^{2/(1-\varepsilon)}L_x^{2/\varepsilon}(\mathbb{R}\times\mathbb{R}^2)}
\lesssim \|R(n,J)\|_{L_x^2(\mathbb{R}^2)}<\infty,
\end{equation*}
we have
\begin{equation*}
\lim_{J\rightarrow\infty}\limsup_{n\rightarrow\infty}
\|e^{it\triangle}R(n,J)\|_{L_t^{2/(1-2\varepsilon)}L_{x}^{1/\varepsilon}(\mathbb{R}\times\mathbb{R}^2)}=0.
\end{equation*}
By \eqref{B-1.20}, \eqref{B-1.21} was obtained.

Using Strichartz estimate,
\begin{equation*}
\begin{split}
&\|\int_0^te^{i(t-\tau)\triangle}
\left(f(v_{n\alpha})-|v_{n\alpha}|^2v_{n\alpha}\right)(\tau)d\tau\|_X\\
\leq&\|\sum_{m=2}^\infty\frac{|v_{n\alpha}|^{2m}v_{n\alpha}}{m!}\|_{L_{t,x}^{4/3}(\mathbb{R}\times\mathbb{R}^2)}
\leq\sum_{m=2}^\infty\frac{1}{m!}\|v_{n\alpha}\|_{L_{t,x}^{4}(\mathbb{R}\times\mathbb{R}^2)}
\|v_{n\alpha}\|^{2m}_{L_{t,x}^{4m}(\mathbb{R}\times\mathbb{R}^2)}\\
\leq &\sum_{m=2}^\infty\frac{1}{m!}(\lambda_{n\alpha})^{-2(m-1)}\|v_\alpha\|_{L_{t,x}^{4}(\mathbb{R}\times\mathbb{R}^2)}
\|v_\alpha\|^{2m}_{L_{t,x}^{4m}(\mathbb{R}\times\mathbb{R}^2)}.
\end{split}
\end{equation*}
By Lemma \ref{B-Lem2.3},
\begin{equation*}
\|v_\alpha\|_{L_{t,x}^{4m}(\mathbb{R}\times\mathbb{R}^2)}\leq Cm^{1/2+1/2m}\|v_\alpha\|^{1/m}_{L_{t,x}^{4}(\mathbb{R}\times\mathbb{R}^2)}
\|v_\alpha\|^{1-1/m}_{L^{\infty}(\mathbb{R},\dot{H}^1)}.
\end{equation*}
Note that $\|v_\alpha\|_{L_{t,x}^{4}(\mathbb{R}\times\mathbb{R}^2)}<\infty$, \eqref{N-13} was obtained.

To prove Lemma \ref{N2-Lem3}, we only left to prove \eqref{B-1.22}.
Note that
\begin{equation*}
\begin{split}
\left|f(\sum_{\alpha=1}^Jz_\alpha)-\sum_{\alpha=1}^Jf(z_\alpha)\right|
\lesssim\sum_{\alpha\neq \alpha'}|z_\alpha||e^{\lambda|z_{\alpha'}|^2}-1|,
\end{split}
\end{equation*}
by \eqref{N2-4}, we have
\begin{equation*}
\begin{split}
&\|\int_0^te^{i(t-\tau)\triangle}
\left(f(\sum_{\alpha\in\Lambda_{1}\cup\Lambda_{\infty}}v_{n\alpha})-\sum_{\alpha\in\Lambda_{1}\cup\Lambda_{\infty}}f(v_{n\alpha})
\right)(\tau)d\tau\|_X\\
\lesssim &\sum_{\alpha\neq \alpha'}\left(\|v_{n\alpha}|v_{n\alpha'}|^2\|_{L_{t,x}^{4/3}(\mathbb{R}\times\mathbb{R}^2)}\right. \\
&\left.+\|v_{n\alpha}(e^{\lambda|v_{n\alpha'}|^2}-\lambda|v_{n\alpha'}|^2-1)\|_
{L_t^{2/(1+2\varepsilon)}L_x^{1/(1-\varepsilon)}(\mathbb{R}\times\mathbb{R}^2)}\right)\\
\lesssim &\sum_{\alpha\neq \alpha'}\||v_{n\alpha}|^{1/2}|v_{n\alpha'}|^{1/2}\|^2_{L_{t,x}^4(\mathbb{R}\times\mathbb{R}^2)}
\|v_{n\alpha'}\|_{L_{t,x}^4(\mathbb{R}\times\mathbb{R}^2)}\\
&+\sum_{\alpha\neq \alpha'}\||v_{n\alpha}|^{\varepsilon/(1+\varepsilon)}|v_{n\alpha'}|^{1/(1+\varepsilon)}\|
^{\frac{4\varepsilon(1+\varepsilon)}{(1-2\varepsilon)}}_{L_{t,x}^4(\mathbb{R}\times\mathbb{R}^2)} \|v_{n\alpha}\|^{\frac{1-2\varepsilon-4\varepsilon^2}{1-2\varepsilon}}_{L_t^{\frac{2(1-2\varepsilon-4\varepsilon^2)}{1-2\varepsilon-6\varepsilon^2}}
L_x^{\frac{1-2\varepsilon-4\varepsilon^2}{\varepsilon^2}}(\mathbb{R}\times\mathbb{R}^2)}\\
&\cdot\||v_{n\alpha'}|^{-4\varepsilon/(1-2\varepsilon)}(e^{\lambda|v_{n\alpha'}|^2}-\lambda|v_{nk'}^{(j')}|^2-1)\|_
{L_t^\infty L_x^{(1-2\varepsilon)/(1-4\varepsilon)}(\mathbb{R}\times\mathbb{R}^2)}.
\end{split}
\end{equation*}
By \eqref{B-1.10}, we immediately obtain \eqref{B-1.22}.
\end{proof}

By \eqref{N2-50},
suppose
\begin{equation}\label{B-1.15}
\sup_{\alpha\in\Lambda_{1}\cup\Lambda_{\infty}}M(\phi_\alpha)\leq m_0-\sigma
\end{equation}
for some $\sigma>0$, we will prove that this leads to a contradiction.
By the definition of $A(m)$ and \eqref{18.2}, we have
\begin{equation}\label{B-1.16}
A(m)\leq Bm \ for\  all\  0\leq m\leq m_0-\sigma,
\end{equation}
where $B=B(\sigma)\in(0,+\infty)$.
Then $v_\alpha$ satisfies
\begin{equation}\label{B-1.17}
M(v_\alpha)=M(\phi_\alpha)\leq m_0-\sigma,
\end{equation}
\begin{equation}\label{B-1.18}
S(v_\alpha)\leq A(M(\phi_\alpha))\leq BM(\phi_\alpha).
\end{equation}
By \eqref{B-1.11}, \eqref{B-1.14} and \eqref{B-1.18}, we have
\begin{equation}\label{B-1.20}
\lim_{J\rightarrow\infty}\limsup_{n\rightarrow\infty}S(\tilde{u}_n)
\leq Bm_0.
\end{equation}

Using Lemma \ref{N2-Lem2} and Lemma \ref{N2-Lem3}, we have
\begin{equation*}
\begin{split}
&M(\tilde{u}_n(0)-u_n(0))\leq\delta,\ \ \ \ S(\tilde{u}_n)\leq 2Bm_0,\\
&\|\int_0^te^{i(t-\tau)\triangle}
((i\partial_t+\triangle)\tilde{u}_n-f(\tilde{u}_n))(\tau)d\tau\|_{L_{t,x}^4\cap L_t^{2/(1-2\varepsilon)}L_x^{1/\varepsilon}(\mathbb{R}\times\mathbb{R}^2)}\leq\delta,
\end{split}
\end{equation*}
for $\delta>0$ sufficiently small, $J=J(\delta)$ and $n=n(J,\delta)$ sufficiently large.
By Lemma \ref{B-Lem1.4}, we obtain that
$S(u_n)\leq3Bm_0$ which contradicts \eqref{B-1.12}.
Thus, \eqref{B-1.15} fails for $\forall\sigma>0$, and then
\begin{equation*}
\sup_{\alpha\in\Lambda_{1}\cup\Lambda_{\infty}}M(\phi_\alpha)=m_0.
\end{equation*}
Comparing this with \eqref{B-1.14}, we have
\begin{equation}\label{B-1.23}
u_n(0)=h_ne^{it_n\triangle}\phi+R_n
\end{equation}
with $t_n$ converging to $\pm\infty$ or $t_n\equiv0$, $h_n\in G$, $M(\phi)=m_0$ and
$$
M(R_n)\rightarrow 0,\  S(e^{it\triangle}R_n)\rightarrow0 \ \ as \ \ n\rightarrow\infty.
$$
Specially, the parameters $\lambda_n, \xi_n$ of $h_n$ must satisfies
\begin{align*}
\begin{aligned}
 \lambda_{n}\equiv1 \ and\  \xi _{n}\equiv0, \ or \ \lambda_{n}\rightarrow\infty.
\end{aligned}
\end{align*}

Since there is only one profile now,
 we have
\begin{equation*}
\tilde{u}_n=T_{h_{n}}v+e^{it\triangle}R_n.
\end{equation*}
When $\lambda_n\rightarrow\infty$,
by the scattering of cubic Schr\"{o}dinger equation (see \cite{B-K-T-V}, \cite{Dodson}), we have $S(v)<\infty$ and $\lim_{n\rightarrow\infty}S(\tilde{u}_n)<\infty$.
By Lemma \ref{N2-Lem2}, Lemma \ref{N2-Lem3} and Lemma \ref{B-Lem1.4}, we obtain that for $n$ sufficiently large,
$S(u_n)<\infty$ which contradicts \eqref{B-1.12}.

When $\lambda_n\equiv1$, $\xi_n\equiv0$ and $t_n\rightarrow+\infty$,
by Strichartz estimate and monotone convergence we have
\begin{equation*}
\lim_{n\rightarrow\infty}S_{\geq0}(e^{it\triangle}e^{it_n\triangle}\phi)=0.
\end{equation*}
Thus
\begin{equation*}
\lim_{n\rightarrow\infty}S_{\geq0}(e^{it\triangle}h_ne^{it_n\triangle}\phi)
=\lim_{n\rightarrow\infty}S_{\geq0}(e^{it\triangle}e^{it_n\triangle}\phi)=0.
\end{equation*}
Since $\lim_{n\rightarrow\infty} S(e^{it\triangle}R_n)=0$,  we can see from \eqref{B-1.23} that
\begin{equation*}
\lim_{n\rightarrow\infty}S_{\geq0}(e^{it\triangle}u_n(0))=0.
\end{equation*}
By Lemma \ref{B-Lem1.4} (with $0$ as the approximate solution and $u_n(0)$ as the initial data),
we have
\begin{equation*}
\lim_{n\rightarrow\infty}S_{\geq0}(u_n)=0
\end{equation*}
which contradicts one of the estimates in \eqref{B-1.12}.

When $\lambda_n\equiv1$,  $\xi_n\equiv0$ and $t_n\rightarrow-\infty$, the argument is similar and we can obtain a contradiction by using the other half of \eqref{B-1.12}.

Now, the only case left is $\lambda_n\equiv1$,  $\xi_n\equiv0$ and $t_n\equiv0$, .
In this case, we have
\begin{equation*}
M(u_n(0)-h_n\phi)=M(R_n)\rightarrow 0 \ \ as \ \ n\rightarrow\infty.
\end{equation*}
Thus $(h_n)^{-1}u_n(0)=e^{i\theta_n}u_n(0, x+x_n)$ converges to $\phi$ in $L^2_x(\mathbb{R}^2)$.
After passing to a subsequence if necessary and refining $\phi$,  the desired result follows.
\end{proof}

Let $\{u_n\}$ be the sequence given in Proposition \ref{Prop1} and suppose $u_n(0, x+x_n)$ converges to $u_0$ strongly in $L^2_x(\mathbb{R}^2)$, then $M(u_0)\leq m_0$. Let $u$  be the global
solution with initial data $u(0)=u_0$, by Lemma \ref{B-Lem1.4}, we must have
\begin{equation*}
S_{\geq 0}(u)=S_{\leq 0}(u)=+\infty.
\end{equation*}
By the definition of $m_0$, $M(u_0)\geq m_0$ and hence $M(u_0)= m_0$.

 Since $u$ is locally in $L_{t,x}^4$, for $\forall t_n\in \mathbb{R}$, we have
\begin{equation*}
S_{\geq t_n}(u)=S_{\leq t_n}(u)=+\infty.
\end{equation*}
Using Proposition \ref{Prop1} for $\{u(t_n)\}$, we have $u(t_n, x+x(t_n))$ converges in $L^2_x(\mathbb{R}^2)$. By Ascoli-Arzela Theorem,
that is
\begin{Proposition}\label{Prop2}
Suppose that the critical mass $m_0$ is finite. Then there exists a global solution $u$ of mass exactly $m_0$ satisfies that for every $\eta>0$ there exists $0<C(\eta)<\infty$ such that
\begin{equation}\label{B-1.1}
\int_{|x-x(t)|\geq C(\eta)}|u(t,x)|^2dx+\int_{|\xi-\xi(t)|\geq C(\eta)}|\hat{u}(t,\xi)|^2d\xi\leq \eta
\end{equation}
for all $t\in \mathbb{R}$,
where the  functions $x,\xi:\mathbb{R}\rightarrow \mathbb{R}^2$.

\end{Proposition}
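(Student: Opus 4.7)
The discussion immediately before the statement already produces a global solution $u$ of mass exactly $m_0$ with $S_{\geq t_0}(u)=S_{\leq t_0}(u)=+\infty$ for every $t_0\in\mathbb{R}$; so the only content in Proposition~\ref{Prop2} that is not already on the page is the joint spatial/frequency localization \eqref{B-1.1}. I would split this into two nearly independent steps: the spatial localization will come from applying Proposition~\ref{Prop1} to the time-translated sequence $u_n(s,x):=u(s+t_n,x)$ together with a compactness/contradiction argument, while the frequency localization is essentially free from the $H^1$ bound built into the hypothesis $H(u_0)\leq1$.

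\textbf{Step 1 (spatial tail).} Fix $\eta>0$ and argue by contradiction: suppose there is a sequence $t_n\in\mathbb{R}$ such that
\begin{equation*}
\inf_{y\in\mathbb{R}^2}\int_{|x-y|\geq n}|u(t_n,x)|^2\,dx>\eta.
\end{equation*}
The time-translated family $u_n(s):=u(s+t_n)$ consists of global solutions of \eqref{1.1b} with $M(u_n)=m_0$ and, because $S_{\geq t_0}(u)=S_{\leq t_0}(u)=+\infty$ for every $t_0$, with $S_{\geq0}(u_n)=S_{\leq0}(u_n)=+\infty$. Hence Proposition~\ref{Prop1} applies and, after passing to a subsequence, yields points $y_n\in\mathbb{R}^2$ and $\phi\in L_x^2$ with $u(t_n,\cdot+y_n)\to\phi$ strongly in $L_x^2$. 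Strong $L^2$-convergence is tight, so for $n$ large
\begin{equation*}
\int_{|x-y_n|\geq n}|u(t_n,x)|^2\,dx\leq\int_{|x|\geq n/2}|\phi|^2\,dx+o(1)\leq\eta,
\end{equation*}
contradicting the assumption. Therefore there exist $C(\eta)<\infty$ and a function $x:\mathbb{R}\to\mathbb{R}^2$ such that the first integral in \eqref{B-1.1} is at most $\eta$ for every $t$.

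\textbf{Step 2 (frequency tail).} Since $F(u)\geq 0$ and both mass and Hamiltonian are conserved,
\begin{equation*}
\|\nabla u(t)\|_{L^2}^2\leq H(u(t))=H(u_0)\leq1\quad\text{for all }t\in\mathbb{R}.
\end{equation*}
By Plancherel and Chebyshev,
\begin{equation*}
\int_{|\xi|\geq R}|\hat u(t,\xi)|^2\,d\xi\leq R^{-2}\int|\xi|^2|\hat u(t,\xi)|^2\,d\xi=R^{-2}\|\nabla u(t)\|_{L^2}^2\leq R^{-2},
\end{equation*}
so setting $\xi(t)\equiv0$ and $C(\eta)\geq\eta^{-1/2}$ handles the second integral in \eqref{B-1.1}. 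Combining Steps 1 and 2 and replacing $C(\eta)$ by the maximum of the two choices produces \eqref{B-1.1}.

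\textbf{Main obstacle.} The one spot requiring a little care is the verification that Proposition~\ref{Prop1} is actually applicable to every time-translate $u_n(s)=u(s+t_n)$; this hinges on the fact that the minimal-mass solution $u$ fails to have finite $L^4_{t,x}$ norm on \emph{both} half-lines originating from \emph{every} time, which is precisely what was established in the remarks preceding the statement (if $S$ were finite on some half-line, one could run the stability Lemma~\ref{B-Lem1.4} backward to obtain a globally finite $S(u)$, contradicting $M(u)=m_0$). Once this is granted, both steps are routine and produce the full compactness statement \eqref{B-1.1}.
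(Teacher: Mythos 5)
Your proposal is correct and follows essentially the same route as the paper: the paper builds the minimal-mass solution in the paragraph preceding the statement, applies Proposition~\ref{Prop1} to $\{u(t_n)\}$ for arbitrary times $t_n$ (using that $u$ is locally in $L^4_{t,x}$, so $S_{\geq t_n}(u)=S_{\leq t_n}(u)=+\infty$), and deduces \eqref{B-1.1} from the resulting precompactness of the orbit in $L^2$ modulo translations (the ``Ascoli--Arzela'' step), which is exactly what your contradiction argument in Step 1 makes explicit. The only divergence is Step 2, where you get the frequency tail with $\xi(t)\equiv 0$ directly from $F\geq 0$ and the conserved bound $\|\nabla u(t)\|_{L^2}\leq 1$ rather than from $L^2$-compactness --- a harmless (indeed cleaner) shortcut; the one detail both you and the paper leave implicit is that the center $x(t)$ can be chosen independently of $\eta$, which follows in the standard way since two centers each capturing mass $\geq m_0-\eta$ must lie within $C(\eta)+C(\eta')$ of each other.
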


\begin{Proposition}\label{Prop3}
The solution described in Proposition \ref{Prop2} does not exist.
\end{Proposition}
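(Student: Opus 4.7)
The plan is to derive a contradiction from the existence of the almost-periodic solution $u$ with $M(u)=m_0$ described in Proposition \ref{Prop2} via a Lin--Strauss Morawetz argument, exploiting the facts that $G(u)\geq 0$ pointwise and that $H(u_0)\leq 1$ automatically bounds $\|\nabla u\|_{L^\infty_t L^2_x}$.

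First I would normalize the Galilean parameter. Equation \eqref{1.1b} is invariant under the Galilean transformation $u(t,x)\mapsto e^{ix\cdot\xi_0-it|\xi_0|^2}u(t,x-2\xi_0 t)$, which preserves mass, Hamiltonian, and $\|u\|_{L^4_{t,x}}$. Using this invariance together with the compactness \eqref{B-1.1} and conservation of momentum $P(u)=\mathrm{Im}\int\bar u\,\nabla u\,dx$, one shows that the frequency center $\xi(t)$ is uniformly bounded and may be reduced to $\xi(t)\equiv 0$, so that the compactness \eqref{B-1.1} on the Fourier side becomes a simple uniform $H^1$-boundedness statement.

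Next I would apply the Lin--Strauss Morawetz estimate using the radial multiplier $a(x)=|x|$. A standard integration by parts, valid because $G(u)\geq 0$, yields
\begin{equation*}
\int_0^T\int_{\mathbb{R}^2}\frac{G(u(t,x))}{|x|}\,dx\,dt\lesssim\sup_{t\in[0,T]}\|u(t)\|_{L^2(\mathbb{R}^2)}\|\nabla u(t)\|_{L^2(\mathbb{R}^2)}\lesssim m_0^{1/2},
\end{equation*}
uniformly in $T>0$. On the other hand, the compactness \eqref{B-1.1} applied to the nontrivial solution $u$, combined with the pointwise bound $G(u)\geq\tfrac{\lambda}{2}|u|^4$ and the uniform $H^1$ control, provides constants $\eta_0,R_0>0$ such that
\begin{equation*}
\int_{|x-x(t)|\leq R_0}G(u(t,x))\,dx\geq\eta_0\quad\text{for every }t\in\mathbb{R}.
\end{equation*}

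Finally, using the now vanishing momentum together with a truncated virial identity applied on a large ball, I would prove the sublinearity $|x(t)|=o(t)$ as $|t|\to\infty$. Combined with the uniform lower bound above, this gives
\begin{equation*}
\int_0^T\int_{\mathbb{R}^2}\frac{G(u)}{|x|}\,dx\,dt\geq c\,\eta_0\int_0^T\frac{dt}{|x(t)|+R_0}\longrightarrow\infty\quad\text{as }T\to\infty,
\end{equation*}
which contradicts the Morawetz bound. The main obstacle is the sublinear center estimate $|x(t)|=o(t)$: the Galilean reduction only kills the constant drift, and extracting the correct control on $x(t)$ for the exponential nonlinearity requires a careful interplay between the $H^1$ almost-periodic compactness, momentum conservation, and a truncated virial identity whose error terms must be absorbed using the rapid decay of $G(u)-\tfrac{\lambda}{2}|u|^4$ near the low-regularity threshold $\lambda<4(1-4\varepsilon)\pi$ fixed in the introduction.
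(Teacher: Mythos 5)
Your overall strategy (a Morawetz-type space--time bound versus a uniform concentration lower bound along the moving center $x(t)$) is the same as the paper's, but the specific Morawetz estimate you invoke does not hold in two space dimensions, and this is a genuine gap. For the multiplier $a(x)=|x|$ in dimension $n$, the Morawetz identity produces the term $-\tfrac{1}{4}\int |u|^2\,\Delta\Delta a\,dx$ with $\Delta\Delta|x|=(n-1)(n-3)|x|^{-3}$; for $n=2$ this equals $+|x|^{-3}$, so the identity contains the contribution $-\int|u|^2|x|^{-3}dx$ of the unfavorable sign, which cannot be discarded or absorbed. Consequently the bound $\int_0^T\!\!\int G(u)|x|^{-1}dx\,dt\lesssim \sup_t\|u\|_{2}\|\nabla u\|_{2}$ is exactly the estimate that is known to fail in $2$D, and your whole contradiction rests on it. This is precisely why the paper instead uses Nakanishi's time-weighted Morawetz estimate (Lemma \ref{Lem4.1}, quoted from \cite{Naka99}), namely $\iint \langle t\rangle^2 G(u)/(\langle t\rangle^3+|x|^3)\,dx\,dt\leq C(E)$, whose proof is designed to survive in low dimensions.

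Two further remarks. First, your Galilean normalization $\xi(t)\equiv0$ is delicate here because a Galilean boost changes $\|\nabla u\|_{L^2}$ and hence can destroy the standing hypothesis $H(u_0)\leq1$ under which all the nonlinear estimates (Trudinger inequality, Lemma \ref{Lem2.03}) are valid; the paper avoids boosts entirely. Second, your insistence on the sublinear bound $|x(t)|=o(t)$ via a truncated virial identity is more than is needed and more than the paper proves: the paper gets only the linear bound $|x(t)-x(0)|\leq 2C(\eta)+R|t|$ from the elementary mass-propagation Lemma \ref{Lem5.2}, and a linear bound already makes $\int \langle t\rangle^{-1}dt$ (or even $\int(|x(t)|+R_0)^{-1}dt$) diverge. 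So the correct repair of your argument is to replace the Lin--Strauss estimate by the weighted estimate of Lemma \ref{Lem4.1} and the virial/momentum analysis by Lemma \ref{Lem5.2}; the concentration lower bound step, obtained from \eqref{B-1.1} and Cauchy--Schwarz on the ball $|x-x(t)|\leq C(\eta)$, is sound as you wrote it.
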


Once we proved Proposition \ref{Prop3}, we can say that $m_0=\infty$
and thus Theorem \ref{th1.2} is true.
In order to prove Proposition \ref{Prop3}, we need the following two lemmas.
\begin{Lemma}\label{Lem4.1}{\rm(\cite{Naka99}, Lemma 5.2)}
Let u be a global solution of \eqref{1.1b}. Then we have
\begin{equation}\label{4.1}
\iint_{\mathbb{R}\times\mathbb{R}^2}\frac{\langle t\rangle^2G(u)}{\langle t\rangle^3+|x|^3}dxdt\leq C(E).
\end{equation}
\end{Lemma}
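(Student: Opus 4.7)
The plan is to carry out a space-time Morawetz-type estimate, essentially reproducing Nakanishi's argument \cite{Naka99}. The idea is to introduce a spatial-temporal weight $a(t,x)$ whose spatial Laplacian pointwise dominates the kernel $\tfrac{\langle t\rangle^2}{\langle t\rangle^3+|x|^3}$. Natural candidates are $a(t,x)=\sqrt{\langle t\rangle^2+|x|^2}$ or $a(t,x)=(\langle t\rangle^3+|x|^3)^{1/3}$; in either case, $|\nabla_x a|\leq 1$ and $a$ is convex in $x$. I would then define the Morawetz action
\begin{equation*}
M(t):=2\,\mathrm{Im}\int_{\mathbb{R}^2}\overline{u(t,x)}\,\nabla_x a(t,x)\cdot\nabla u(t,x)\,dx,
\end{equation*}
and observe that Cauchy-Schwarz together with the conservation of $E$ yields $|M(t)|\leq C(E)$ uniformly in $t$.

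Next I would differentiate $M(t)$ in time using $iu_t+\Delta u=f(u)$, arriving at the standard Morawetz identity
\begin{equation*}
\frac{dM}{dt}=2\!\int\!\partial_i\partial_j a\,\mathrm{Re}(\overline{\partial_i u}\,\partial_j u)\,dx-\tfrac12\!\int\!\Delta_x^2 a\,|u|^2\,dx+\!\int\!\Delta_x a\cdot G(u)\,dx+\!\int\!\partial_t a_j\,\mathrm{Im}(\bar u\,\partial_j u)\,dx,
\end{equation*}
with $G(u)=\bar u f(u)-F(u)$ as in the introduction. The Hessian term is nonnegative by convexity of $a$ in $x$ and can therefore be discarded. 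The bi-Laplacian term is bounded by $C(E)$ since $\Delta_x^2 a$ is integrable in $x$ uniformly in $t$, and the $\partial_t a$ correction is controlled by $\|\partial_t\nabla_x a\|_{L^\infty}\|u\|_{L^2}\|\nabla u\|_{L^2}\leq C(E)$. A direct computation shows $\Delta_x a\gtrsim \tfrac{\langle t\rangle^2}{\langle t\rangle^3+|x|^3}$. Since $G(u)\geq 0$ from its series representation $\sum_{k\geq 1}\tfrac{k\lambda^k|u|^{2k+2}}{(k+1)!}$, integrating $dM/dt$ over $t\in\mathbb{R}$ and using $\sup_t |M(t)|\leq C(E)$ gives \eqref{4.1}.

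The main obstacle is choosing the weight $a$ so that the three error contributions—the Hessian term in $|\nabla u|^2$, the bi-Laplacian term in $|u|^2$, and the $\partial_t a$ correction—are all simultaneously controllable by $C(E)$, while $\Delta_x a$ still reproduces the desired kernel $\tfrac{\langle t\rangle^2}{\langle t\rangle^3+|x|^3}$. The familiar two-dimensional difficulty, that $a(x)=|x|$ produces $\Delta a=1/|x|$ without any $t$ dependence, is resolved by coupling space and time through a single radial variable, which is precisely what both weights above accomplish. Once the weight is fixed, the remainder is a routine but lengthy calculation with the explicit formulas for $\nabla a$, $\Delta a$, $\partial_t\nabla a$, and $\Delta^2 a$, and the positivity of $G(u)$ ensures the nonlinear term has the correct sign to yield the estimate after integration in $t$.
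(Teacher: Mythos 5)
The paper gives no proof of this lemma at all; it is quoted directly from \cite{Naka99} (Lemma 5.2), so the only thing to check is whether your reconstruction of Nakanishi's argument actually closes. It does not: the step that fails is your treatment of the time-derivative correction term. For the weight $a=\sqrt{\langle t\rangle^2+|x|^2}$ one has $\partial_t\nabla_x a=-tx/a^3$, so $\|\partial_t\nabla_x a\|_{L^\infty_x}\sim\langle t\rangle^{-1}$, and the best pointwise-in-time bound your argument produces for the correction is
\begin{equation*}
\Bigl|\int_{\mathbb{R}^2}\partial_t\nabla_x a\cdot\mathrm{Im}(\bar u\nabla u)\,dx\Bigr|\lesssim \langle t\rangle^{-1}\|u\|_{L^2}\|\nabla u\|_{L^2}\lesssim C(E)\,\langle t\rangle^{-1},
\end{equation*}
which is \emph{not} integrable over $t\in\mathbb{R}$; the claim that this term is ``controlled by $C(E)$'' conflates a uniform-in-$t$ bound on the integrand with a bound on its time integral, and there is no cancellation to save you (already for free solutions $\int x\cdot\mathrm{Im}(\bar u\nabla u)\,dx$ grows linearly in $t$, so the term is genuinely of size $\langle t\rangle^{-1}$). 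This is exactly the obstruction that forces Nakanishi to use a genuinely space-time multiplier: his action has the form $\int\bigl(2t\,e(u)+x\cdot p(u)\bigr)\omega^{-1}dx$, where $e$ is the energy density and $p$ the momentum density, and it is the extra $2t\,e(u)/\omega$ component --- absent from your purely spatial multiplier $2\,\mathrm{Im}\int\bar u\,\nabla a\cdot\nabla u\,dx$ with a time-dependent weight --- that cancels the logarithmically divergent cross term. Your bi-Laplacian bound also needs the time integration to be stated correctly ($\|\Delta_x^2a\|_{L^\infty_x}\lesssim\langle t\rangle^{-3}$ against $\|u\|_{L^2}^2$, then integrate in $t$), but that part is repairable.

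A secondary error: of your two candidate weights, $a=(\langle t\rangle^3+|x|^3)^{1/3}$ does \emph{not} satisfy $\Delta_x a\gtrsim\langle t\rangle^2(\langle t\rangle^3+|x|^3)^{-1}$. A direct computation in two dimensions gives $\Delta_x a=|x|\,(3\langle t\rangle^3+|x|^3)(\langle t\rangle^3+|x|^3)^{-5/3}$, which for $|x|\ll\langle t\rangle$ is of size $|x|\langle t\rangle^{-2}$, much smaller than the required $\langle t\rangle^{-1}$; the lower bound degenerates on the whole region $|x|\ll\langle t\rangle$, which is precisely where the mass concentrates for the soliton-like scenario the lemma is meant to exclude. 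The weight $a=\sqrt{\langle t\rangle^2+|x|^2}$ does satisfy the pointwise lower bound, so you should commit to that choice --- but even then the argument does not close without restructuring the multiplier as above.
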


%
%
%
\begin{Lemma}\label{Lem5.2}{\rm(\cite{Naka99}, Lemma 6.2)}
Let u be a global solution of \eqref{1.1b}. Let $B$ be a compact subset of $\mathbb{R}^2$. Then for any $R>0$
and $T>0$, we have
\begin{equation}\label{5.2}
\int_{B(R)}|u(T,x)|^2dx\geq\int_B|u(0,x)|^2dx-C(E)T/R,
\end{equation}
where $B(R):=\{x\in\mathbb{R}^2|\exists y\in B \ s.t.\  |x-y|<R\}$.
\end{Lemma}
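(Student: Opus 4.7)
My plan is to prove Lemma 5.2 as a local mass (finite-propagation-speed type) estimate obtained by testing the equation against a smooth spatial cutoff. Choose $\varphi\in C^\infty(\mathbb{R}^2;[0,1])$ such that $\varphi\equiv 1$ on $B$, $\mathrm{supp}\,\varphi\subset B(R)$, and $\|\nabla\varphi\|_{L^\infty(\mathbb{R}^2)}\leq C/R$ (a mollified indicator of $B$ dilated to $B(R)$ provides this). Define the localized mass
$$M_\varphi(t):=\int_{\mathbb{R}^2}\varphi(x)|u(t,x)|^2\,dx.$$
Since $\chi_B\leq\varphi\leq\chi_{B(R)}$, the conclusion reduces to showing $M_\varphi(T)\geq M_\varphi(0)-C(E)T/R$.

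Next, I would differentiate $M_\varphi$ in time and insert $iu_t=-\Delta u+f(u)$. The nonlinear contribution to $\partial_t|u|^2$ is proportional to $\bar u f(u)-u\overline{f(u)}$, which vanishes pointwise because $\bar u f(u)=(e^{\lambda|u|^2}-1)|u|^2$ is real-valued. Integrating by parts on the remaining kinetic term yields the standard current identity
$$\frac{d}{dt}M_\varphi(t)=2\,\mathrm{Im}\int_{\mathbb{R}^2}\nabla\varphi\cdot\bar u\,\nabla u\,dx.$$
By Cauchy--Schwarz together with the conservation of mass and the fact that $F(u)\geq 0$ gives $\|\nabla u(t)\|_{L^2}^2\leq H(u_0)\leq 1$ and $\|u(t)\|_{L^2}^2=M(u_0)\leq E$, we obtain
$$\left|\frac{d}{dt}M_\varphi(t)\right|\leq 2\|\nabla\varphi\|_{L^\infty}\|u(t)\|_{L^2}\|\nabla u(t)\|_{L^2}\leq\frac{C(E)}{R}.$$
Integrating from $0$ to $T$ and sandwiching $\varphi$ between $\chi_B$ and $\chi_{B(R)}$ yields the claimed inequality.

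The only real issue is justifying the formal differentiation and the integration by parts, since the argument uses second spatial derivatives of $u$. I would address this in the standard way: first prove the identity for $H^2$ solutions, where everything is classical, and then use density of $H^2$ data in $H^1$ combined with the continuous dependence of the $H^1$-flow (available from the global well-posedness theory of \cite{Coll2}) to pass to the limit. Both $M_\varphi(0)$ and $M_\varphi(T)$ are continuous under $L^2$-convergence, and the right-hand side $C(E)T/R$ depends only on conserved quantities, so the limit produces exactly \eqref{5.2}. No substantial analytical obstacle arises beyond this routine approximation.
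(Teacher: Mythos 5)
Your proof is correct: the localized-mass computation with a cutoff $\varphi$ satisfying $\chi_B\leq\varphi\leq\chi_{B(R)}$ and $\|\nabla\varphi\|_{L^\infty}\lesssim 1/R$, the cancellation of the nonlinear term since $\bar u f(u)$ is real, and the bound $|\frac{d}{dt}M_\varphi|\leq 2\|\nabla\varphi\|_{L^\infty}\|u\|_{L^2}\|\nabla u\|_{L^2}\leq C(E)/R$ is exactly the standard argument. The paper itself offers no proof (it simply cites Lemma 6.2 of \cite{Naka99}), and that cited proof proceeds by the same local mass-flux identity, so your attempt matches the intended route.
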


$The\  proof\  of \ Proposition\  \ref{Prop3}.$
By Lemma \ref{Lem5.2}, choosing $\eta$ sufficiently small,
\begin{equation*}
\begin{split}
\int_{|x-x(0)|\leq C(\eta)+R|t|}|u(t,x)|^2dx&\geq\int_{|x-x(0)|\leq C(\eta)}|u(0,x)|^2dx-C(E)/R\\
&\geq m_0-\eta-C(E)/R.
\end{split}
\end{equation*}
By Proposition \ref{Prop2},
\begin{equation*}
\int_{|x-x(t)|\leq C(\eta)}|u(t,x)|^2dx\geq m_0-\eta.
\end{equation*}
For a fixed large number $R$, we must have $|x(t)-x(0)|\leq 2C(\eta)+R|t|$.
By Lemma \ref{Lem4.1} and H\"{o}lder inequality,
\begin{equation*}
\begin{split}
\infty&>\iint_{\mathbb{R}\times\mathbb{R}^2}\frac{\langle t\rangle^2|u|^4}{\langle t\rangle^3+|x|^3}dxdt\\
&\gtrsim \int_{\mathbb{R}}\int_{|x-x(t)|\leq C(\eta)}\frac{\langle t\rangle^2|u|^4}{\langle t\rangle^3+|x|^3}dxdt\\
&\gtrsim\int_{\mathbb{R}}\int_{|x-x(t)|\leq C(\eta)}\frac{|u|^4}{\langle t\rangle+1}dxdt\\
&\gtrsim\int_{\mathbb{R}}\int_{|x-x(t)|\leq C(\eta)}\frac{|u|^2}{\langle t\rangle+1}dxdt\\
&\gtrsim (m_0-\eta)\int_{\mathbb{R}}\frac{1}{\langle t\rangle+1}dxdt=\infty.
\end{split}
\end{equation*}
This is a contradiction. Proposition \ref{Prop3} was obtained.
\hfill $\square$


\begin{thebibliography}{99}

\bibitem{AT} S. Adachi and K. Tanaka, Trudinger type inequalities in $\mathbb{R}^N$ and there best exponents,
Proc. Amer. Math. Soc. 128(2000), 2051-2057.

\bibitem{B-B-G}  H. Bahouri, P. G\'{e}rard, High frequency approximation of solutions to critical nonlinear wave equations,
Amer. J. Math. 121(1999), 131-175.


\bibitem{Bourgain99} J. Bourgain, Global well-posedness of defocusing 3D critical NLS in the radial case, J. Amer.
Math. Soc. 12 (1999), 145-171.

\bibitem{B-B-V}  P. Begout, A.Vargas, Mass concentration phenomena for the $L^2$-critical nonlinear Schr\"{o}dinger equation, Transactions of the American Mathematical Society, 359(2007), 5257-5282.




\bibitem{Coll2} J. Colliander, S. Ibrahim, M. Majdoub and N. Masmoudi,  Energy critical NLS in two space dimensions,
J. Hype. Diff. Equa. 6(2009), 549-575.


\bibitem{Coll} J. Colliander, M. Keel, G.Staffilani, H. Takaoka, and T. Tao, Viriel, Morawetz, and interraction
Morawetz inequalities, Notes(2006).



\bibitem{Dodson} B. Dodson, Global well-posedness and scattering for the defocusing, L2-critical, nonlinear Schr\"{o}dinger equation when d=2, arXiv:1006.1375v1 [math.AP].




\bibitem{Gini85}  J.Ginibre and G.Velo,
Scattering theory in the energy space for a class of nonlinear Schr\"{o}dinger equations,
J.Math.pures Appl., 64(1985), 363-401.

\bibitem{Gini85-2}  J.Ginibre and G.Velo, Time dacay of finite energy solutions of the Klein-Gordon
and Schr\"{o}dinger equations, Ann. Inst. H. Poincar\'{e} Phys. Th\'{e}or. 43(1985), 399-442.



\bibitem{Naka2009} S. Ibrahim, M. Majdoub, N. Masmoudi and K. Nakanishi, Energy scattering for 2D critical wave equation,
arXiv:0806.3150v1 [math.AP].


\bibitem{K-M}C. Kenig and F. Merle, Global well-posedness, scattering, and blowup for the energy-critical,
focusing, non-linear Schr\"{o}dinger equation in the radial case, Invent. Math. 166(2006), 645-675.



\bibitem{B-K-T-V}  R. Killip, T. Tao and M. Visan, The cubic nonlinear Schr\"{o}dinger equation in two dimensions with radial data,
arXiv:0707.3188v2 [math.AP].

\bibitem{B-M-V}  F. Merle, L. Vega, compactness at blow-up time for $L^2$ solutions of the critical nonlinear Schr\"{o}dinger equation in $2D$,
Internat. Math. Res. Not. 8(1998), 399-425.



\bibitem{Moser} J. Moser, A sharp form of an inequality by N. Trudinger, Indiana Univ. Math. J.20(1971),
1077-1092.



\bibitem{Naka99}  K. Nakanishi,
Energy scattering for nonlinear Klein-Gordon and Schr\"{o}dinger equations in spatial dimensions 1 and 2,
J. Func. Anal. 169(1999), 201-225.




\bibitem{Nakamura}  M. Nakamura, T. Ozawa, Nonlinear Schr\"{0}dinger equations in the Sobolev space of critical order, J. Funct.
Anal., 155(1998), 364-380.




\bibitem{Shao}  S. Shao, Sharp linear and bilinear restriction estimates for the paraboloid in the cylindrically
symmetric case, arXiv:0706.3759v3 [math.AP].


\bibitem{B-T-M-Z}  T. Tao, M. Visan and X. Zhang, Minimal-mass blowup solutions of the mass-critical NLS, arXiv:0609690v2 [math.AP].


\bibitem{Wang}  B. Wang, The smoothness of scattering operators for Sinh-Gordon and nonlinear Schr\"{o}dinger equations,
Acta. Math. Sini., 18(2002), 549-564.

\bibitem{W-H}  B. Wang, C. Hao and H. Hudzik, Energy scattering theory for the nonlinear Schr\"{o}dinger equations with exponential
growth in lower spatial dimensions, J. Diff. Equa., 228(2006), 311-338.



\end{thebibliography}
\end{document}